\newtheorem{theorem}{Theorem}[subsection]
\newtheorem{lemma}[theorem]{Lemma}
\newtheorem{corollary}[theorem]{Corollary}
\newtheorem{proposition}[theorem]{Proposition}
\theoremstyle{definition}
\newtheorem{remark}[theorem]{Remark}
\newtheorem{example}[theorem]{Example}
\numberwithin{equation}{section}
\numberwithin{theorem}{section}
\newcommand{\Tr}{\mathrm{Tr}}
\newcommand{\cM}{{\cal M}}
\newcommand{\cA}{{\cal A}}
\newcommand{\cB}{{\cal B}}
\newcommand{\cT}{{\cal T}}
\newcommand{\cQ}{{\mathcal Q}}
\newcommand{\cZ}{{\mathcal Z}}
\newcommand{\cR}{{\mathcal R}}
\newcommand{\cFM}{{\mathcal{FM}}}
\newcommand{\C}{{\mathbb C}}
\newcommand{\Z}{{\mathbb Z}}
\newcommand{\N}{{\mathbb N}}
\newcommand{\R}{{\mathbb R}}
\newcommand{\Cs}{{$C^*$-al\-ge\-bra}}
\newcommand{\sh}{{$^*$-ho\-mo\-mor\-phism}}
\date{}
\title{Factorizable maps and traces on the universal free product of matrix algebras}
\author{Magdalena Musat$^*$ and Mikael R\o rdam\thanks{This research was supported by a travel grant from the Carlsberg Foundation, and by a research grant from the Danish Council for Independent Research, Natural Sciences. This work was in part carried while the authors were visiting the Institute for Pure and Applied Mathematics (IPAM), which is supported by the National Science Foundation.}}
\begin{document}

\maketitle

\begin{abstract} 
\noindent We relate factorizable quantum channels on $M_n(\C)$, for $n \ge 2$, via their Choi matrix, to certain matrices of correlations, which, in turn, are shown to be parametrized by traces on the unital free product $M_n(\C) *_\C M_n(\C)$. Factorizable maps that admit a finite dimensional ancilla are parametrized by finite dimensional traces on $M_n(\C) *_\C M_n(\C)$, and factorizable maps that approximately factor through finite dimensional \Cs s are parametrized by traces in the closure of the finite dimensional ones. The latter set of traces is shown to be equal to the set of hyperlinear traces on $M_n(\C) *_\C M_n(\C)$.  We finally show that each metrizable Choquet simplex is a face of the simplex of tracial states on $M_n(\C) *_\C M_n(\C)$.
\end{abstract}

\section{Introduction}

\noindent Factorizable maps were introduced by C.\ Anantharaman-Delaroche in \cite{A-D:factorizable} in her study of non-commutative analogues of classical ergodic theory results. This notion has lately found interesting applications in quantum information theory, e.g., in solving in the negative the asymptotic quantum Birkoff conjecture, \cite{HaaMusat:CMP-2011}. A factorizable channel $T$ on  $M_n(\C)$ is a unital completely positive trace-preserving map $M_n(\C) \to M_n(\C)$ that \emph{factors} through a finite tracial von Neumann algebra $(M,\tau_M)$ via two unital \sh s $M_n(\C) \to M$ (see more details in Section~\ref{sec:Choi}). Factorizable maps were in \cite{HaaMusat:CMP-2011} equivalently characterized as arising from
 an ancillary tracial von Neumann algebra $(N,\tau_N)$ and a unitary $u$ in $M_n(\C) \otimes N$ such that $T(x) = (\mathrm{id}_n \otimes \tau_N)(u(x \otimes 1_N)u^*)$, for all $x \in M_n(\C)$. It was recently shown in \cite{MusRor:infdim} that the ancilla $N$ cannot always be taken to be finite dimensional (or even of type I). U.\ Haagerup and the first named author proved in  \cite[Theorem 3.7]{HaaMusat:CMP-2015} that  each factorizable channel can be approximated by factorizable ones possessing a finite dimensional ancilla if and only if the Connes Embedding Problem has an affirmative answer.

In this paper we present a different viewpoint on factorizable channels, that bears resemblance to the description of quantum correlation matrices arising in Tsirelson's conjecture. In Section~\ref{sec:Choi}, we establish when a linear map on $M_n(\C)$ is factorizable in terms of certain properties of its Choi matrix. Fritz, \cite{Fritz:Tsirelson}, and, independently, Junge et al., \cite{JNPP-GSW:Tsirelsen}, expressed the quantum correlation matrices appearing in Tsirelson's conjecture in terms of states on the minimal, respectively, the maximal tensor product of the full group \Cs{} associated with the free product of finitely many copies of a finite cyclic group. (This characterization, in turn, was the bridge needed to prove the equivalence between Tsirelson's conjecture and the Connes Embedding Problem, with the finishing touch provided by Ozawa, \cite{Ozawa:Tsirelson}.)  In a similar spirit,  we recast the description of factorizable maps on $M_n(\C)$ in terms of traces on the unital universal free product $M_n(\C) *_\C M_n(\C)$. We show that factorizable channels with finite dimensional ancilla are parametrized by finite dimensional traces on $M_n(\C) *_\C M_n(\C)$, and factorizable channels that can be approximated by ones possessing a finite dimensional ancilla are parametrized by traces in the closure of the finite dimensional ones. 

This new viewpoint on factorizable channels led us to further analyze the trace simplex of the unital universal free product $M_n(\C) *_\C M_n(\C)$. This \Cs{}  is known to be residually finite dimensional, \cite{ExelLoring:free_products},  and semiprojective, \cite{Bla:shape}. As explained in  Section~\ref{sec:fd}, it is not the case that the set of finite dimensional traces on a residually finite dimensional \Cs{} necessarily is weak$^*$-dense. In this section, we further review known facts and establish new results on the closure of finite dimensional traces on general (residually finite dimensional) unital \Cs s, and we relate this set to the convex compact sets of quasi-diagonal, amenable, respectively, hyperlinear traces. For the particular \Cs{} 
$M_n(\C) *_\C M_n(\C)$, we show in Theorem~\ref{thm:Tfin(MnMn)}  that the closure of the finite dimensional traces is equal to the set of hyperlinear traces. By the aforementioned result, \cite[Theorem 3.7]{HaaMusat:CMP-2015}, by Haagerup and the first named author, if the finite dimensional tracial states on $M_n(\C) *_\C M_n(\C)$ are weak$^*$-dense in the set of all tracial states, then the Connes Embedding Problem has an affirmative answer. Theorem~\ref{thm:Tfin(MnMn)} implies that the converse also holds, so these two statements are equivalent.

We further show that whenever $\cA$ is a unital \Cs{} generated by $n-1$ unitaries, then $M_n(\C) \otimes \cA$ is a quotient of $M_n(\C) *_\C M_n(\C)$, and therefore generated by two copies of $M_n(\C)$.  For $n \ge 3$, the  class of \Cs s $\cA$ as above includes all singly generated \Cs s (and hence, for example, all finite dimensional \Cs s). As an interesting application, we show in Theorem~\ref{prop:Poulsen} that the Poulsen simplex is a face of the trace simplex of $M_n(\C) *_\C M_n(\C)$, whenever $n \ge 3$. We leave open if the trace simplex of $M_n(\C) *_\C M_n(\C)$ itself is the Poulsen simplex. We recommend Alfsen's book, \cite{Alf:convex}, as an excellent reference for Choquet theory.

\section{Finite dimensional traces and their convex structure} \label{sec:fd}

\noindent
Let $\cA$ be a unital \Cs, and denote by $T(\cA)$ the simplex of tracial states on $\cA$. For each $\tau \in T(\cA)$ consider the closed two-sided ideal
\begin{equation} \label{eq:I-tau}
I_\tau = \{a \in \cA : \tau(a^*a)=0\}
\end{equation}
in $\cA$. A tracial state $\tau$ on $\cA$ is said to \emph{factor through} another unital \Cs{} $\cB$, if $\tau = \tau' \circ \varphi$, for some unital \sh{} $\varphi \colon \cA \to \cB$ and some tracial state $\tau'$ on $\cB$. If $\varphi$ is surjective, we say that $\tau$ \emph{factors surjectively through} $\cB$.  Furthermore,  $\tau$ is said to be  \emph{finite dimensional} if it factors through a finite dimensional \Cs. Equivalently, $\tau$ is finite dimensional  if and only if $\cA/I_\tau$ is finite dimensional. This, again, is equivalent to the enveloping von Neumann algebra $\pi_\tau(\cA)''$ of the GNS representation $\pi_\tau$ arising from $\tau$ being finite dimensional. (Note that $\pi_\tau(\cA) \cong \cA/I_\tau$.) The set of finite dimensional tracial states on $\cA$ is denoted by
$T_\mathrm{fin}(\cA)$. 
Clearly, $T_\mathrm{fin}(\cA)$  is non-empty precisely when $\cA$ admits at least one finite dimensional representation.  

\newpage

\begin{proposition} \label{prop:fdtrace1} Let $\cA$ be a unital \Cs, and assume that  $T_\mathrm{fin}(\cA)$ is non-empty. Then:
\begin{enumerate}
\item $T_\mathrm{fin}(\cA)$ is a  (convex) face of $T(\cA)$, and its closure $\overline{T_\mathrm{fin}(\cA)}$ is a closed face of $T(\cA)$.
\item $T_\mathrm{fin}(\cA) = \mathrm{conv} \Big(\partial_e T(\cA) \cap T_\mathrm{fin}(\cA)\Big)$, and $\partial_e T(\cA) \cap T_\mathrm{fin}(\cA)$ consists of those tracial states on $\cA$ that factor surjectively through $M_k(\C)$, for some $k \ge 1$. (Here $\partial_e T(\cA)$ denotes the set of extreme points of $T(\cA)$.)
\end{enumerate}
\end{proposition}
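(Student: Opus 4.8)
The plan is to establish (ii) first, since its analysis of $\cA/I_\tau$ feeds directly into the face computation in (i). Throughout I would exploit the identifications recorded in the text: $\pi_\tau(\cA)\cong\cA/I_\tau$, and $\pi_\tau(\cA)''$ is the weak closure of this quotient, so that $\tau$ is extreme in $T(\cA)$ exactly when $\pi_\tau(\cA)''$ is a factor. For $\tau\in\partial_e T(\cA)\cap T_\mathrm{fin}(\cA)$ the algebra $\pi_\tau(\cA)''$ is then a finite dimensional factor, hence $M_k(\C)$ for some $k$; finite dimensionality forces $\pi_\tau(\cA)=\pi_\tau(\cA)''$, so $\cA/I_\tau\cong M_k(\C)$ and $\tau=\mathrm{tr}_k\circ q$ with $q\colon\cA\to\cA/I_\tau$ the quotient map, a surjective factorization through $M_k(\C)$. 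Conversely, if $\tau=\tau'\circ\varphi$ with $\varphi\colon\cA\to M_k(\C)$ a surjective unital $*$-homomorphism, then $\tau'=\mathrm{tr}_k$ (the unique trace), so $I_\tau=\Ker\varphi$ because $\mathrm{tr}_k$ is faithful, giving $\cA/I_\tau\cong M_k(\C)$; this is a factor, so $\tau$ is extreme and finite dimensional.

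For the convex hull identity in (ii), the inclusion $\supseteq$ is immediate once $T_\mathrm{fin}(\cA)$ is known to be convex. For $\subseteq$, given $\tau\in T_\mathrm{fin}(\cA)$ I would write $\cA/I_\tau\cong\bigoplus_{i=1}^m M_{k_i}(\C)$; the trace induced on this quotient is faithful, hence of the form $\sum_i\lambda_i\,\mathrm{tr}_{k_i}\circ p_i$ with all $\lambda_i>0$ and $\sum_i\lambda_i=1$, where $p_i\colon\cA\to M_{k_i}(\C)$ is the $i$-th block projection composed with the quotient, and is therefore surjective. Each $\tau_i:=\mathrm{tr}_{k_i}\circ p_i$ lies in $\partial_e T(\cA)\cap T_\mathrm{fin}(\cA)$ by the previous paragraph, and $\tau=\sum_i\lambda_i\tau_i$ displays $\tau$ in the required convex hull.

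The faceness in (i) rests on the single identity $I_{\lambda\tau_1+(1-\lambda)\tau_2}=I_{\tau_1}\cap I_{\tau_2}$ for $0<\lambda<1$, which follows because $\tau(a^*a)=\lambda\tau_1(a^*a)+(1-\lambda)\tau_2(a^*a)$ is a sum of nonnegative terms. Convexity then follows since $\cA/(I_{\tau_1}\cap I_{\tau_2})$ embeds into $\cA/I_{\tau_1}\oplus\cA/I_{\tau_2}$, which is finite dimensional; and the face property follows since, for $\tau\in T_\mathrm{fin}(\cA)$ decomposed as $\tau=\lambda\sigma_1+(1-\lambda)\sigma_2$, one has $I_\tau=I_{\sigma_1}\cap I_{\sigma_2}\subseteq I_{\sigma_i}$, so each $\cA/I_{\sigma_i}$ is a quotient of the finite dimensional $\cA/I_\tau$, whence $\sigma_i\in T_\mathrm{fin}(\cA)$. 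I would also record the hereditary variant: if $\sigma\le t\tau$ for positive tracial functionals with $\tau\in T_\mathrm{fin}(\cA)$, then $I_\tau\subseteq I_\sigma$ and so $\sigma\in T_\mathrm{fin}(\cA)$.

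The remaining assertion, that $\overline{T_\mathrm{fin}(\cA)}$ is a closed face, is where the real difficulty lies. Since $T(\cA)$ is a Choquet simplex, this reduces to the general principle that the closure of a face of a Choquet simplex is again a face. The natural attempt is to work in the cone $P$ of positive tracial functionals, which is a lattice cone by the simplex property, with $T_\mathrm{fin}(\cA)$ corresponding to a hereditary subcone $\tilde F$; to show $\overline{\tilde F}$ is hereditary one would, given $\phi\in\overline{\tilde F}$ and $0\le\psi\le\phi$ with $\phi_\alpha\to\phi$ in $\tilde F$, approximate $\psi$ by the meets $\psi\wedge\phi_\alpha\in\tilde F$. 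The \emph{main obstacle} is that this approximation can lose mass: the lattice meet is not weak$^*$-continuous (tested against rapidly oscillating densities, $\psi\wedge\phi_\alpha$ converges to a functional strictly below $\psi$), so one cannot conclude $\psi\wedge\phi_\alpha\to\psi$ and the naive argument breaks. Overcoming this requires the finer simplex structure, and I would instead invoke Alfsen's theory of split faces — closed faces of a simplex are split — to identify $\overline{T_\mathrm{fin}(\cA)}$ with the smallest closed (split) face containing the face $T_\mathrm{fin}(\cA)$, thereby concluding it is a closed face. This simplex-theoretic step is the crux; the rest is the elementary ideal and GNS bookkeeping above.
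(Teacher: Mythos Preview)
Your treatment of (ii) and of $T_\mathrm{fin}(\cA)$ being a face in (i) is correct and essentially matches the paper's: both use the identity $I_{\lambda\tau_1+(1-\lambda)\tau_2}=I_{\tau_1}\cap I_{\tau_2}$ for the face property, and the GNS/factor characterization of extremal traces for (ii). A minor difference is that you derive convexity from the same ideal identity (via the embedding $\cA/(I_{\tau_1}\cap I_{\tau_2})\hookrightarrow\cA/I_{\tau_1}\oplus\cA/I_{\tau_2}$), whereas the paper builds an explicit factorization through $\cB_1\oplus\cB_2$; either route is fine.

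Where you diverge is on the closure assertion in (i). The paper disposes of it in one sentence, invoking as a general principle that the closure of a face of any compact convex set is again a face. You instead flag this as the crux and reach for simplex-specific machinery. Your caution is not unreasonable, but the route you sketch does not close the loop: identifying $\overline{T_\mathrm{fin}(\cA)}$ with ``the smallest closed (split) face containing $T_\mathrm{fin}(\cA)$'' already presupposes that the closure is a face, since otherwise the smallest closed face containing $T_\mathrm{fin}(\cA)$ could strictly contain $\overline{T_\mathrm{fin}(\cA)}$, and knowing merely that closed faces of a simplex are split tells you nothing about that intermediate closed convex set. The lattice-meet attempt you try first is closer to the mark, and the obstruction you name (failure of weak$^*$-continuity of $\wedge$) is genuine; if you do not wish to cite the closure principle outright as the paper does, what you actually need to prove or cite is the simplex-specific statement that in a Choquet simplex the closure of any face is a face, and that is established by a direct argument (e.g., via unique maximal representing measures) rather than via the split-face characterization of already-closed faces.
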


\begin{proof}  (i). Let $\tau_1, \tau_2$ belong to $T_\mathrm{fin}(\cA)$, witnessed by finite dimensional \Cs s $\cB_1$ and $\cB_2$, unital \sh s $\varphi_j \colon \cA \to \cB_j$, and tracial states $\sigma_j$ on  $\cB_j$ such that $\tau_j = \sigma_j \circ \varphi_j$, for $j=1,2$. Consider the \sh{} $\varphi = \varphi_1 \oplus \varphi_2 \colon \cA \to \cB_1 \oplus \cB_2$. Fix $0 < c < 1$ and let $\sigma$ be the tracial state on $\cB_1 \oplus \cB_2$ given by $\sigma(b_1,b_2) = c\sigma_1(b_1)+(1-c)\sigma_2(b_2)$, for $b_1 \in \cB_1$ and $b_2 \in \cB_2$. Then $c\tau_1 + (1-c) \tau_2 = \sigma \circ \varphi$, which belongs to $T_\mathrm{fin}(\cA)$.

Suppose, conversely, that $\tau_1, \tau_2$ belong to $T(\cA)$, and that $c \tau_1 + (1-c) \tau_2$ belongs to $T_\mathrm{fin}(\cA)$, for some $0 < c < 1$. Then $\cA/I_{c \tau_1 + (1-c) \tau_2}$ is finite dimensional. But $I_{c \tau_1 + (1-c) \tau_2} = I_{\tau_1} \cap I_{\tau_2}$, so $\cA/I_{\tau_1}$ and  $\cA/I_{\tau_2}$ are both finite dimensional, whence $\tau_1,\tau_2$ belong to $T_\mathrm{fin}(\cA)$.

The last claim follows from the fact that the closure of a face of any compact convex set is, again, a face.

(ii). It is well-known that $\tau$ is an extreme point in $T(\cA)$ if and only if $\pi_\tau(\cA)''$ is a factor. If $\pi_\tau(\cA)$ is finite dimensional, then this happens if and only if $\pi_\tau(\cA)$ is a full matrix algebra, whence $\tau$ is as desired. 

Let $\tau$ be an arbitrary finite dimensional trace on $\cA$, and write it as $\tau = \tau_0 \circ \varphi$, for some unital \sh{} $\varphi \colon \cA \to \cB$ onto some finite dimensional \Cs{} $\cB$, and  some tracial state $\tau_0$ on $\cB$. Write $\cB = \bigoplus_{j=1}^r \cB_j$, where each $\cB_j$ is a full matrix algebra equipped with tracial state $\mathrm{tr}_{B_j}$, and let $\pi_j \colon \cB \to \cB_j$ be the canonical projection. Then $\tau = \sum_{j=1}^r c_j \tau_j$, where $\tau_j = \mathrm{tr}_{\cB_j} \circ \pi_j \circ \varphi$ and $c_j = \tau(e_j)$, where $e_j \in \cB$ is the unit of $\cB_j$. 
\end{proof}

\noindent
We have the following inclusions of tracial states on any unital \Cs{} $\cA$:
$$T_\mathrm{fin}(\cA) \subseteq \overline{T_\mathrm{fin}(\cA)} \subseteq T_\mathrm{qd}(\cA) \subseteq T_\mathrm{am}(\cA) \subseteq T_\mathrm{hyp}(\cA) \subseteq T(\cA),$$
where $T_\mathrm{qd}(\cA)$, $T_\mathrm{am}(\cA)$ and $T_\mathrm{hyp}(\cA)$ are the sets of \emph{quasi-diagonal}, \emph{amenable}, respectively, \emph{hyperlinear} tracial states on $\cA$. Recall, e.g., from  \cite{Brown:MAMS}, see also the introduction of \cite{Schafhauser:qd}, that a tracial state $\tau$ on $\cA$ is \emph{hyperlinear} if it factors through an ultrapower $\cR^\omega$ of the hyperfinite II$_1$-factor $\cR$. Equivalently, $\tau$ is hyperlinear if $\pi_\tau(\cA)''$ embeds in a trace-preserving way into $\cR^\omega$.   If the embedding $\pi_\tau(\cA)'' \to \cR^\omega$ moreover can be chosen to admit a u.c.p.\ lift $\pi_\tau(\cA)'' \to \ell^\infty(\cR)$, then $\tau$ is \emph{amenable} (or \emph{liftable}, in the terminology of Kirchberg, \cite{Kir:T}). A tracial state $\tau$ on an \emph{exact} \Cs{} $\cA$ is amenable if and only if $\pi_\tau(\cA)''$ is hyperfinite, see  \cite[Corollary 4.3.6]{Brown:MAMS}. A trace $\tau$ is \emph{quasi-diagonal} if it factors through the \Cs{} $\prod_{n=1}^\infty M_{k_n}(\C)/\bigoplus_{k=1}^\infty M_{k_n}(\C)$ with a u.c.p.\ lift $\cA \to \prod_{n=1}^\infty M_{k_n}(\C)$,  for some sequence of integers $k_n \ge 1$. 

Each of the three sets $T_\mathrm{qd}(\cA)$, $T_\mathrm{am}(\cA)$, and $T_\mathrm{hyp}(\cA)$ is compact and convex. Kirchberg proved in \cite{Kir:T} that, moreover, $T_\mathrm{am}(\cA)$  is a face of $T(\cA)$.  The Connes Embedding Problem is equivalent to $T_\mathrm{hyp}(\cA)$ being equal to $T(\cA)$, for all \Cs s $\cA$. It is not known if  $T_\mathrm{qd}(\cA) =T_\mathrm{am}(\cA)$ in general, but in recent remarkable works, e.g.,  \cite{TWW:qd}, \cite{Gabe:qd} and \cite{Schafhauser:qd}, it  has been resolved  that amenable traces are quasi-diagonal in many important cases of interest. The inclusions $\overline{T_\mathrm{fin}(\cA)} \subseteq T_\mathrm{qd}(\cA)$ and $T_\mathrm{am}(\cA) \subseteq T_\mathrm{hyp}(\cA)$ are in general strict, even for residually finite dimensional \Cs s $\cA$, cf.\ Propositions~\ref{prop:Nate} and Example~\ref{ex:extend} (as well as the remarks above  Example~\ref{ex:extend}). In particular, $T_\mathrm{fin}(\cA)$ need not be weak$^*$-dense in $T(\cA)$, for residually finite dimensional \Cs s $\cA$.

Recall that a \Cs{} $\cA$ is \emph{residually finite dimensional} if it admits a separating family of finite dimensional representations $\varphi_i \colon \cA \to M_{k(i)}(\C)$, $i \in I$. The index set $I$ can be taken to be countable if $\cA$ is separable. Equivalently, $\cA$ is residually finite dimensional if and only if the set of finite dimensional traces on $\cA$ is \emph{separating}, in the sense that $\bigcap_{\tau \in T_\mathrm{fin}(\cA)} I_\tau = \{0\}$. We can sharpen this statement, as follows:

\begin{proposition} \label{prop:RFD-T_fin}
A unital \Cs{} $\cA$ is residually finite dimensional if and only if $\overline{T_\mathrm{fin}(\cA)}$ is separating. If $\cA$ is separable, then $\overline{T_\mathrm{fin}(\cA)}$ is separating if and only if it  contains a faithful trace.
\end{proposition}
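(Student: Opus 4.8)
The plan is to reduce the first equivalence to the already-recorded fact that $\cA$ is residually finite dimensional if and only if $T_\mathrm{fin}(\cA)$ is separating, and to obtain the second part from separability via an averaging argument. The whole proof rests on the observation that, for each fixed $a \in \cA$, the function $\tau \mapsto \tau(a^*a)$ is weak$^*$-continuous on $T(\cA)$. Consequently $\tau(a^*a) = 0$ holds for every $\tau \in T_\mathrm{fin}(\cA)$ if and only if it holds for every $\tau$ in the weak$^*$-closure $\overline{T_\mathrm{fin}(\cA)}$ (vanishing on a dense set forces vanishing on the closure), which says precisely that
\[
\bigcap_{\tau \in T_\mathrm{fin}(\cA)} I_\tau \;=\; \bigcap_{\tau \in \overline{T_\mathrm{fin}(\cA)}} I_\tau .
\]
Hence $T_\mathrm{fin}(\cA)$ is separating exactly when $\overline{T_\mathrm{fin}(\cA)}$ is, and the first assertion follows from the quoted characterization of residual finite dimensionality.

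For the second part, one implication is immediate: if $\overline{T_\mathrm{fin}(\cA)}$ contains a faithful trace $\tau_0$, then $I_{\tau_0} = \{0\}$, so the intersection of the ideals $I_\tau$ over $\overline{T_\mathrm{fin}(\cA)}$ is already $\{0\}$ and the set is separating. (Separability of $\cA$ is not needed here.) The content is the converse.

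For the converse I would use separability to manufacture a single faithful trace by averaging a countable separating subfamily. Since $\cA$ is separable, the weak$^*$-topology on $T(\cA)$ is metrizable, so the compact set $\overline{T_\mathrm{fin}(\cA)}$ is metrizable and therefore possesses a countable dense subset $\{\tau_k\}_{k \ge 1}$. I claim this subset is itself separating: if $\tau_k(a^*a) = 0$ for all $k$, then by density together with the weak$^*$-continuity of $\tau \mapsto \tau(a^*a)$ the same vanishing holds for every $\tau \in \overline{T_\mathrm{fin}(\cA)}$, whence $a = 0$ because $\overline{T_\mathrm{fin}(\cA)}$ is separating by hypothesis. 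Now set $\tau = \sum_{k=1}^\infty 2^{-k}\tau_k$. Each normalized partial sum is a finite convex combination of elements of $\overline{T_\mathrm{fin}(\cA)}$, hence lies in that set; since $\overline{T_\mathrm{fin}(\cA)}$ is a closed convex set (indeed a closed face, by Proposition~\ref{prop:fdtrace1}(i)), the weak$^*$-limit $\tau$ belongs to it as well. Finally $\tau$ is faithful: if $\tau(a^*a) = 0$, then $\sum_k 2^{-k}\tau_k(a^*a) = 0$ forces every non-negative summand $\tau_k(a^*a)$ to vanish, so $a \in \bigcap_k I_{\tau_k} = \{0\}$.

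I expect the only genuinely delicate points to be bookkeeping rather than conceptual: checking that the infinite convex combination remains inside the closed set $\overline{T_\mathrm{fin}(\cA)}$ (for which convexity and closedness, supplied by Proposition~\ref{prop:fdtrace1}(i), are exactly what is needed), and invoking metrizability of the trace space to extract the countable dense — and hence separating — subfamily. Everything else is a direct consequence of the weak$^*$-continuity of $\tau \mapsto \tau(a^*a)$.
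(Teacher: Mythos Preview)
Your proof is correct. The first part matches the paper's argument essentially verbatim. For the second part, however, you and the paper take different paths to the countable family of traces to be averaged. The paper fixes a countable dense set $\{a_n\}$ of positive contractions in $\cA$ and, for each $a_n$, produces (via a short functional-calculus argument) a trace $\tau_n \in \overline{T_\mathrm{fin}(\cA)}$ satisfying the quantitative bound $\|a_n + I_{\tau_n}\| \ge \|a_n\|/2$; averaging these $\tau_n$ then yields a trace $\tau$ with $\|a + I_\tau\| \ge \|a\|/2$ for every positive $a$, which is stronger than mere faithfulness. You instead exploit separability on the \emph{trace} side: metrizability of $T(\cA)$ gives a countable dense subset of $\overline{T_\mathrm{fin}(\cA)}$, which is automatically separating by continuity, and averaging it produces a faithful trace directly. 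Your route is shorter and sidesteps the functional-calculus step; the paper's route yields the (unused) quantitative norm estimate as a byproduct.
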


\begin{proof} The first part of the statement follows from the remark above, and the fact that $\bigcap_{\tau \in \cT} I_\tau = \bigcap_{\tau \in \overline{\cT}} I_\tau$, for every subset $\cT$ of $T(\cA)$. 

Assume now that $\cA$ is separable and that  $\overline{T_\mathrm{fin}(\cA)}$ is separating. Observe first that for each positive element $a \in \cA$, there is $\tau \in \overline{T_\mathrm{fin}(\cA)}$ such that $\|a+I_\tau\| \ge \|a\|/2$. To see this we may assume that $\|a\|=1$. Let $g \colon [0,1] \to [0,1]$ be a continuous function which is zero on $[0,1/2]$ and with $g(1)=1$. Then $g(a)$ is positive and non-zero, so there is $\tau \in \overline{T_\mathrm{fin}(\cA)}$ such that $\tau(g(a)) > 0$. It follows that $g(a+I_\tau) = g(a)+I_\tau \ne 0$. This entails that $\|a+I_\tau\| > 1/2$.

Let $\{a_n\}_{n \ge 1}$ be a dense set of positive contractions in $\cA$. For each $n \ge 1$, find $\tau_n \in \overline{T_\mathrm{fin}(\cA)}$ such that $\|a_n + I_{\tau_n}\| \ge \|a_n\|/2$. Set $\tau = \sum_{n=1}^\infty 2^{-n} \tau_n$. Since $\overline{T_\mathrm{fin}(\cA)}$ is convex and weak$^*$-closed (and hence norm-closed), it follows that $\tau$ belongs to $\overline{T_\mathrm{fin}(\cA)}$. Moreover, $I_\tau = \bigcap_{n=1}^\infty I_{\tau_n}$, whence $\|a_n+I_\tau\| \ge \|a_n + I_{\tau_n}\| \ge \|a_n\|/2$, for all $n \ge 1$. This implies that $\|a+I_\tau\| \ge \|a\|/2$, for all positive contractions $a \in \cA$, from which we see that $\tau$ is faithful.
\end{proof}

\noindent
Kirchberg proved in \cite{Kir:T} that $\overline{T_{\mathrm{fin}}(C^*(G))} = T_{\mathrm{am}}(C^*(G))$, for all discrete  groups $G$ with Kazhdan's property (T); he also proved therein  that a  property (T) group is  residually finite if and only if it possesses the \emph{factorization property}. Hence, by Proposition~\ref{prop:RFD-T_fin}, a countably infinite property (T) group $G$ has residually finite dimensional group \Cs{} $C^*(G)$ if and only if $C^*(G)$ has a faithful amenable trace. As it turns out, there are no known examples of infinite property (T) groups where $C^*(G)$ is residually finite dimensional, or even quasi-diagonal, or where $C^*(G)$ has a faithful tracial state (amenable or not). Bekka proved in \cite{Bekka-Invent-07} that $C^*(G)$ has no faithful tracial state, and hence is not residually finite dimensional, when $G = \mathrm{SL}(n,\Z)$, for $n \ge 3$.

\begin{proposition}[N.\ Brown, {\cite[Corollary 4.3.8]{Brown:MAMS}}] \label{prop:Nate}
There exists an exact unital residually finite dimensional \Cs{} $\cA$ for which $T_\mathrm{am}(\cA) \ne T_{\mathrm{hyp}}(\cA)$. 
\end{proposition}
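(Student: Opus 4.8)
The plan is to exhibit an explicit exact, unital, residually finite dimensional \Cs{} carrying a tracial state that is hyperlinear but not amenable. Since the algebra will be exact, by \cite{Brown:MAMS} (as recalled above) a trace is amenable precisely when its GNS von Neumann algebra is hyperfinite; so it suffices to produce a trace whose GNS von Neumann algebra is Connes-embeddable but \emph{not} hyperfinite, the natural candidate being the free group factor $L(\mathbb{F}_2)$. The guiding principle is that any C*-subalgebra $\cA \subseteq \prod_n M_{k_n}(\C)$ is automatically residually finite dimensional, because the coordinate projections separate points; and that for any free ultrafilter $\omega$ the limit trace $\tau(a) = \lim_{n\to\omega}\mathrm{tr}_{k_n}(a_n)$ is automatically hyperlinear, since $\pi_\tau(\cA)''$ is by construction (isomorphic to) a subalgebra of the tracial ultraproduct $\prod_{n\to\omega}M_{k_n}(\C)\cong \cR^\omega$. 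Thus two properties remain to be arranged: non-hyperfiniteness of the GNS algebra, and exactness of $\cA$.

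For the first, I would realize $L(\mathbb{F}_2)$ through a matrix model: choose unitaries $u_n,v_n \in M_{k_n}(\C)$ that are asymptotically free with respect to the normalized trace (Voiculescu), so that $\lim_{n\to\omega}\mathrm{tr}_{k_n}(w(u_n,v_n)) = \tau_{\mathbb{F}_2}(w)$ for every word $w$. Writing $U=(u_n)_n$ and $V=(v_n)_n$, the elements $U,V$ become free Haar unitaries in the GNS construction of the limit trace, so that $\pi_\tau(C^*(U,V))'' = L(\mathbb{F}_2)$, which is not hyperfinite. To secure exactness I would pass to the corona $\cQ = \prod_n M_{k_n}(\C)/\bigoplus_n M_{k_n}(\C)$ and invoke the strong-convergence theorem of Haagerup--Thorbj\o rnsen: for a suitable (e.g.\ Haar-random) choice of $u_n,v_n$ one has $\limsup_n\|w(u_n,v_n)\| = \|w\|_{C^*_r(\mathbb{F}_2)}$, so the images $[U],[V]$ generate a copy of the \emph{reduced} algebra $C^*_r(\mathbb{F}_2)$ inside $\cQ$. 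Taking $\cA$ to be the preimage of $C^*_r(\mathbb{F}_2)$ under the corona quotient yields the extension
\[
0 \longrightarrow \textstyle\bigoplus_n M_{k_n}(\C) \longrightarrow \cA \longrightarrow C^*_r(\mathbb{F}_2) \longrightarrow 0 .
\]
The ideal is nuclear and the quotient is exact, so by Kirchberg's theorem that exactness is closed under extensions, $\cA$ is exact; moreover $\cA$ is unital and, being a subalgebra of $\prod_n M_{k_n}(\C)$, is residually finite dimensional.

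To conclude, observe that the limit trace $\tau$ vanishes on the ideal $\bigoplus_n M_{k_n}(\C)$ and hence descends to a trace on the quotient $C^*_r(\mathbb{F}_2)$, which on the generators returns $\tau_{\mathbb{F}_2}$. As the quotient map is surjective, $\pi_\tau(\cA)'' = L(\mathbb{F}_2)$. Therefore $\tau$ is hyperlinear, while by exactness of $\cA$ together with the non-hyperfiniteness of $L(\mathbb{F}_2)$ it fails to be amenable, so $T_{\mathrm{am}}(\cA) \ne T_{\mathrm{hyp}}(\cA)$.

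I expect the exactness step to be the genuine obstacle. Reconciling exactness with residual finite dimensionality is delicate: an exact residually finite dimensional \Cs{} can never \emph{contain} the simple, infinite dimensional algebra $C^*_r(\mathbb{F}_2)$, since every finite dimensional representation would annihilate it, so $C^*_r(\mathbb{F}_2)$ can appear only as a quotient. The hard point is then to certify that the relevant quotient is the reduced, hence exact, completion rather than some intermediate group-C*-norm of unknown exactness; this is precisely what the deep strong-convergence input (Haagerup--Thorbj\o rnsen) supplies, and it is what drives the argument.
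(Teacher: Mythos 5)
Your construction is, in substance, exactly the one behind the result of Brown that the paper relies on: the paper's proof consists of quoting \cite{Br-N:quasi} for the existence of a separable unital exact residually finite dimensional \Cs{} surjecting onto $C^*_\lambda(\mathbb{F}_2)$, and then observing that the trace pulled back from the canonical trace has GNS von Neumann algebra $\mathcal{L}(\mathbb{F}_2)$, hence is hyperlinear but, by exactness, not amenable. Your matrix-model realization of $C^*_\lambda(\mathbb{F}_2)$ inside the corona via strong convergence, the pullback construction, and the analysis of the limit trace (residual finite dimensionality from the coordinate maps, hyperlinearity from the tracial ultraproduct, non-amenability from \cite[Corollary 4.3.6]{Brown:MAMS} plus non-hyperfiniteness of $\mathcal{L}(\mathbb{F}_2)$) all reproduce Brown's argument correctly.

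The genuine gap is the exactness step, which you rightly single out as the crux and then dispose of by invoking a theorem that does not exist. There is no theorem of Kirchberg asserting that exactness is closed under extensions; Kirchberg proved the opposite: there are non-exact extensions of exact \Cs s by exact, indeed nuclear, ideals. The obstruction is that a finite dimensional operator subsystem of an exact quotient need not admit a u.c.p.\ lift, i.e., the extension need not be locally split, and local splitness (together with exactness of ideal and quotient) is what one needs to conclude exactness of the middle term. So ``nuclear ideal and exact quotient'' does not certify your displayed extension as exact. What rescues the argument --- and what Brown's proof actually uses --- is that this particular extension is \emph{quasidiagonal}: the partial sums $P_N = \bigoplus_{n \le N} 1_{k_n}$ form an approximate unit of $\bigoplus_n M_{k_n}(\C)$ consisting of projections that are central in $\prod_n M_{k_n}(\C)$, hence commute with $\cA$. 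The permanence result that is actually available (due to Kirchberg, and the one applied in \cite{Br-N:quasi} and \cite[Corollary 4.3.8]{Brown:MAMS}) is that a \emph{quasidiagonal} extension with exact ideal and exact quotient is exact; you need to invoke that statement, with the quasicentral projections exhibited, rather than a general permanence of exactness under extensions. A minor further point: strong convergence for independent Haar unitaries is due to Collins and Male; Haagerup--Thorbj{\o}rnsen treat GUE matrices, from which a unitary model is then extracted, so the substance of that step is unaffected.
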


\noindent In more detail, Brown showed in \cite{Br-N:quasi} that there is a separable unital exact residually finite dimensional \Cs{} $\cA$ that surjects onto $C^*_\lambda(\mathbb{F}_2)$. The (unique) tracial state $\tau$ on $\cA$ that factors through $C^*_\lambda(\mathbb{F}_2)$ is not amenable, because $\pi_\tau(\cA)''$ (which is equal to the group von Neumann algebra $\mathcal{L}(\mathbb{F}_2)$) is not hyperfinite, and $\cA$ is exact. It is hyperfinite because $\mathcal{L}(\mathbb{F}_2)$ admits an embedding into $\cR^\omega$, as observed by Connes in \cite{Con:class}.

\begin{remark} \label{rem:not-closed}
We note that the  convex set of finite dimensional tracial states on a unital \Cs{}  is almost never closed. More precisely, if $\cA$ is a unital residually finite dimensional \Cs, then $T_{\mathrm{fin}}(\cA)$ is closed if and only if $\cA$ is finite dimensional. Indeed, if $\cA$ is infinite dimensional, then it admits a sequence $\{\pi_n\}_{n\ge 1}$ of pairwise inequivalent finite dimensional irreducible representations. For $n \ge 1$, set $\tau_n = \mathrm{tr}_{k(n)} \circ \pi_n$, where $k(n)$ is the dimension of the representation $\pi_n$. Then 
$\{\tau_n\}_{n \ge 1}$ is a sequence of distinct extreme points of $T_{\mathrm{fin}}(\cA)$, and $\tau := \sum_{n=1}^\infty 2^{-n} \tau_n$ belongs to the closure of $T_{\mathrm{fin}}(\cA)$, but not to $T_{\mathrm{fin}}(\cA)$ itself.
\end{remark}

\noindent
A separable  \Cs{} is residually finite dimensional if and only if it embeds into a \Cs{} of the form $\cM= \prod_{n=1}^\infty M_{k(n)}(\C)$, for some sequence $\{k(n)\}_{n \ge 1}$ of positive integers. The (non-separable) \Cs{} $\cM$ is itself residually finite dimensional. The following result  is contained in Ozawa, \cite[Theorem 8]{Ozawa:Dixmier}, but also follows from \cite{Wright:AW*}, as explained below:

\begin{proposition}  \label{prop:Ozawa}
The set $T_\mathrm{fin}(\cM)$ is weak$^*$-dense in $T(\cM)$, when $\cM = \prod_{n=1}^\infty M_{k(n)}(\C)$.
\end{proposition}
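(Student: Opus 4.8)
The plan is to identify the whole trace simplex $T(\cM)$ with the state space of the centre $Z(\cM)$, and then run a soft Krein--Milman argument on the latter. Write $a = (a_n)_n$ for $a \in \cM$, and let $E \colon \cM \to Z(\cM)$ be the canonical conditional expectation onto the centre $Z(\cM) = \prod_{n} \C\,1_{k(n)} \cong \ell^\infty(\N) \cong C(\beta\N)$, given by $E(a) = (\mathrm{tr}_{k(n)}(a_n)\,1_{k(n)})_n$. Since $\mathrm{tr}_{k(n)}(a_nb_n) = \mathrm{tr}_{k(n)}(b_na_n)$, the functional $\mu \circ E$ is a tracial state on $\cM$ whenever $\mu$ is a state on $Z(\cM)$, and $(\mu\circ E)|_{Z(\cM)} = \mu$. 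The key structural claim is the converse: \emph{every} $\tau \in T(\cM)$ satisfies $\tau = (\tau|_{Z(\cM)}) \circ E$. Granting this, $\tau \mapsto \tau|_{Z(\cM)}$ is an affine weak$^*$-homeomorphism of $T(\cM)$ onto the state space $S(Z(\cM)) = \mathrm{Prob}(\beta\N)$, with inverse $\mu \mapsto \mu \circ E$; continuity in both directions is immediate, since a trace is already determined by its values on $E(\cM) \subseteq Z(\cM)$.

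Next I would transport the problem to $\mathrm{Prob}(\beta\N)$. For $n \in \N$ let $\tau_n = \mathrm{tr}_{k(n)} \circ \pi_n \in T_\mathrm{fin}(\cM)$, where $\pi_n \colon \cM \to M_{k(n)}(\C)$ is the $n$-th coordinate projection; under the above identification $\tau_n$ corresponds to the point mass $\delta_n$. A finite convex combination $\sum_i c_i \tau_{n_i}$ corresponds to a finitely supported probability measure on $\N$ and factors through $\bigoplus_i M_{k(n_i)}(\C)$, hence lies in $T_\mathrm{fin}(\cM)$. It therefore suffices to show that the finitely supported measures on $\N$ are weak$^*$-dense in $\mathrm{Prob}(\beta\N)$. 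This is soft: since $\N$ is dense in $\beta\N$, the family $\{\delta_n : n \in \N\}$ is weak$^*$-dense in the set of point masses $\{\delta_x : x \in \beta\N\} = \partial_e \,\mathrm{Prob}(\beta\N)$, and hence, using $\overline{\mathrm{conv}}(A) = \overline{\mathrm{conv}}(\overline{A})$ together with the Krein--Milman theorem, the weak$^*$-closed convex hull of $\{\delta_n\}$ is all of $\mathrm{Prob}(\beta\N)$. Pulling back gives $\overline{T_\mathrm{fin}(\cM)}^{\,w^*} = T(\cM)$.

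The main obstacle is the structural claim that each $\tau \in T(\cM)$ factors through the centre-valued trace $E$, i.e. that $\tau(a) = \tau(E(a))$ for all $a$; equivalently that $\tau$ annihilates $\ker E = \{a : \mathrm{tr}_{k(n)}(a_n) = 0 \ \forall n\}$. This is precisely the (tracial) Dixmier property of $\cM$, established in Ozawa, \cite{Ozawa:Dixmier}, and also obtainable from Wright's analysis of finite AW$^*$-algebras, \cite{Wright:AW*}. Conceptually, one wants $E(a)$ to lie in the norm-closed convex hull of the unitary orbit $\{uau^* : u \in \mathcal{U}(\cM)\}$, since a tracial state is norm-continuous, affine, and constant on unitary orbits. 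Because conjugation and convex combinations in $\cM$ act block-wise with the \emph{same} weights, the real content is a Dixmier averaging estimate for $M_k(\C)$ that is uniform in $k$: for each $\varepsilon > 0$ there exist a number $m$ and weights $\lambda_1, \dots, \lambda_m$, independent of $k$, such that every contraction $b \in M_k(\C)$ admits unitaries $u_1, \dots, u_m$ with $\|\sum_j \lambda_j u_j b u_j^* - \mathrm{tr}_k(b)\,1\| < \varepsilon$. Equivalently, one may seek a dimension-free commutator bound, writing each trace-zero block $a_n$ as $a_n = [x_n, y_n]$ with $\|x_n\|,\|y_n\|$ controlled in terms of $\|a_n\|$ only, so that $a - E(a) = [x,y]$ for some $x,y \in \cM$ and hence $\tau(a - E(a)) = 0$. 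Securing this uniform estimate is the crux; the remaining steps are formal.
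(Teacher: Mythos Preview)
Your argument is correct and follows essentially the same route as the paper: reduce to the centre via the centre-valued trace, identify $Z(\cM)$ with $C(\beta\N)$, and conclude by Krein--Milman together with density of $\N$ in $\beta\N$. The paper dispatches the structural claim $\tau = (\tau|_{Z(\cM)})\circ E$ by a one-line appeal to Wright's result on finite AW$^*$-algebras, \cite{Wright:AW*}, whereas you additionally sketch the underlying mechanism (uniform Dixmier averaging, or equivalently a dimension-free commutator bound) and cite Ozawa, \cite{Ozawa:Dixmier}, as an alternative source; this extra discussion is helpful but not needed, since $\cM$ is a finite von Neumann algebra and the factorisation through the centre-valued trace is classical.
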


\begin{proof}  Since $\cM$ is an AW$^*$-algebra (in fact, a finite von Neumann algebra), we can use   \cite{Wright:AW*} to see that any tracial state on $\cM$ factors through the center $\cZ(\cM)$ of $\cM$, via the (unique) center-valued trace. The center  $\cZ(\cM)$ can be identified with  $C(\beta \N)$, the continuous functions on the Stone-\v{C}ech compactification of $\N$. Hence, tracial states on $\cM$ are in one-to-one correspondence with probability measures on $\beta \N$. Furthermore, finite dimensional traces correspond to convex combinations of Dirac measures at points of $\N$. Since $\N$ is dense in $\beta \N$, and since the convex hull of Dirac measures (at points of $\beta \N$) is dense in the set of all probability measures on $\beta \N$, we reach the desired conclusion. 
\end{proof}

\noindent Consider a unital embedding $\varphi$ of a unital residually finite dimensional \Cs{} $\cA$ into 
$\cM:= \prod_{n=1}^\infty M_{k(n)}(\C)$, for some sequence of positive integers $k(n) \ge 1$. Let $\pi_n$ denote the $n$th coordinate map $\cM \to M_{k(n)}(\C)$.  We say that the inclusion $\varphi$ is \emph{saturated} if 
\begin{equation} \label{eq:standard}
\Big(\bigoplus_{n=1}^N \pi_n\Big)(\cA) = \bigoplus_{n=1}^N M_{k(n)}(\C),
\end{equation} for all $N \ge 1$, and $\{\tau_n : n \ge 1\}$ is weak$^*$-dense in $\partial_e T(\cA) \cap T_\mathrm{fin}(\cA)$, where $\tau_n = \mathrm{tr}_{k(n)} \circ \pi_n \circ \varphi$. 

Each separable  residually finite dimensional  unital \Cs{} admits a saturated embedding into some $ \prod_{n=1}^\infty M_{k(n)}(\C)$. Indeed, if $\cA$ is separable, then $T(\cA)$ is separable, too, and hence so is 
$\partial_e T(\cA) \cap T_\mathrm{fin}(\cA)$. Pick a countable dense subset $\{\tau_n : n \ge 1 \}$ of this set. Then  $\tau_n = \mathrm{tr}_{k(n)} \circ \varphi_n$, for some surjective \sh{} $\varphi_n \colon \cA \to M_{k(n)}(\C)$, cf.\ Proposition~\ref{prop:fdtrace1}, and $\varphi := \bigoplus_{n \ge 1} \varphi_n$ is a saturated embedding of $\cA$. Injectivity of $\varphi$ follows from
$\mathrm{ker}(\varphi) = \bigcap_{n=1}^\infty I_{\tau_n} = \bigcap_{\tau \in T_{\mathrm{fin}}(\cA)} I_\tau = \{0\}$, where the second equality follows from density of $\{\tau_n : n \ge 1 \}$ in $\partial_e T(\cA) \cap T_\mathrm{fin}(\cA)$ and  Proposition~\ref{prop:fdtrace1}. 

\begin{proposition} \label{prop:extendtraces}
Let $\cA$ be a unital \Cs.
\begin{enumerate}
\item If $\tau \in T(\cA)$ factors through $\prod_{n=1}^\infty M_{k(n)}(\C)$, for some sequence of integers $k(n) \ge 1$, then $\tau \in \overline{T_{\mathrm{fin}}(\cA)}$.
\item If $\cA$ is a separable residually finite dimensional \Cs{} with saturated embedding $\varphi \colon \cA \to \prod_{n=1}^\infty M_{k(n)}(\C):= \cM$, then $\overline{T_{\mathrm{fin}}(\cA)}$ consists precisely of those traces $\tau$  on $\cA$ that extend to a trace on $\cM$ (in the sense that  $\tau = \tau' \circ \varphi$, for some $\tau' \in T(\cM)$).
\end{enumerate}
\end{proposition}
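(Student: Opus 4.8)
The plan is to derive (i) directly from Ozawa's density theorem (Proposition~\ref{prop:Ozawa}) and then to obtain (ii) by a Choquet-type squeeze that traps the pull-back image of $T(\cM)$ between two sets which the saturation hypothesis forces to coincide; the only genuine analytic input is Proposition~\ref{prop:Ozawa} itself. For (i), I would begin with a factorisation $\tau = \tau' \circ \psi$, where $\psi \colon \cA \to \cM := \prod_{n=1}^\infty M_{k(n)}(\C)$ is a unital \sh{} and $\tau' \in T(\cM)$, and use Proposition~\ref{prop:Ozawa} to pick a net $(\tau'_\lambda)$ in $T_\mathrm{fin}(\cM)$ with $\tau'_\lambda \to \tau'$ weak$^*$. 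The key observation is that finite-dimensionality of a trace is preserved under pre-composition with a unital \sh: if $\tau'_\lambda$ factors through a finite dimensional \Cs{} $\cB$ via $\rho \colon \cM \to \cB$, then $\tau'_\lambda \circ \psi$ factors through $\cB$ via $\rho \circ \psi$, so $\tau'_\lambda \circ \psi \in T_\mathrm{fin}(\cA)$. Since $\tau'_\lambda \circ \psi \to \tau' \circ \psi = \tau$ weak$^*$, this places $\tau$ in $\overline{T_\mathrm{fin}(\cA)}$.

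For (ii), let $\Phi \colon T(\cM) \to T(\cA)$ be the weak$^*$-continuous affine pull-back $\tau' \mapsto \tau' \circ \varphi$, and put $E := \Phi(T(\cM))$, which is by definition exactly the set of traces on $\cA$ that extend to $\cM$; being the continuous affine image of a compact convex set, $E$ is compact and convex. Two inclusions then come for free. First, $E \subseteq \overline{T_\mathrm{fin}(\cA)}$ is precisely part (i) applied to the embedding $\varphi$. Second, each $\tau_n = \mathrm{tr}_{k(n)} \circ \pi_n \circ \varphi = \Phi(\mathrm{tr}_{k(n)} \circ \pi_n)$ lies in $E$, so, $E$ being closed and convex, $\overline{\mathrm{conv}\{\tau_n : n \ge 1\}} \subseteq E$. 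Everything then reduces to the single equality
\[
\overline{T_\mathrm{fin}(\cA)} = \overline{\mathrm{conv}\{\tau_n : n \ge 1\}},
\]
for once this is known the chain $\overline{\mathrm{conv}\{\tau_n\}} \subseteq E \subseteq \overline{T_\mathrm{fin}(\cA)} = \overline{\mathrm{conv}\{\tau_n\}}$ collapses to $E = \overline{T_\mathrm{fin}(\cA)}$, which is the assertion. Note that no measure-theoretic analysis of $T(\cM)$ beyond Proposition~\ref{prop:Ozawa} is needed, since all such information is already absorbed into part (i).

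The displayed equality is where the saturation hypothesis is indispensable, and I regard it as the crux, even though it is soft. Writing $S := \partial_e T(\cA) \cap T_\mathrm{fin}(\cA)$, the surjectivity clause of saturation forces $\pi_n \circ \varphi \colon \cA \to M_{k(n)}(\C)$ to be onto, so by Proposition~\ref{prop:fdtrace1}(ii) each $\tau_n$ lies in $S$, while the density clause gives $\{\tau_n\} \subseteq S \subseteq \overline{\{\tau_n\}}$; passing to convex hulls and closures yields $\overline{\mathrm{conv}\{\tau_n\}} = \overline{\mathrm{conv}(S)}$. On the other hand Proposition~\ref{prop:fdtrace1}(ii) identifies $T_\mathrm{fin}(\cA) = \mathrm{conv}(S)$, whence $\overline{T_\mathrm{fin}(\cA)} = \overline{\mathrm{conv}(S)}$, and the two right-hand sides coincide. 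The main point to be careful about is exactly that saturation is essential: without the density of $\{\tau_n\}$ in $S$ one only retains the (possibly strict) inclusions $\overline{\mathrm{conv}\{\tau_n\}} \subseteq E \subseteq \overline{T_\mathrm{fin}(\cA)}$ and cannot force equality, so the argument genuinely uses both clauses of the definition of a saturated embedding.
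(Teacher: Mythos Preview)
Your proof is correct and follows essentially the same route as the paper: part~(i) is deduced from Proposition~\ref{prop:Ozawa} by pulling back finite-dimensional traces along the given \sh, and part~(ii) is obtained by observing that the set $E$ of extendable traces is the closed convex image of $T(\cM)$ under pull-back, contains the $\tau_n$ (whose closed convex hull is $\overline{T_\mathrm{fin}(\cA)}$ by the density clause of saturation together with Proposition~\ref{prop:fdtrace1}(ii)), and is contained in $\overline{T_\mathrm{fin}(\cA)}$ by part~(i). The only minor redundancy is your appeal to the surjectivity clause to place each $\tau_n$ in $S$; for the equality $\overline{\mathrm{conv}\{\tau_n\}} = \overline{\mathrm{conv}(S)}$ you only need $\{\tau_n\}$ dense in $S$ together with $\tau_n \in T_\mathrm{fin}(\cA) = \mathrm{conv}(S)$, both of which hold without invoking \eqref{eq:standard}.
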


\begin{proof} Part (i) follows from Proposition~\ref{prop:Ozawa} and the fact that if $\tau'$ belongs to $T_{\mathrm{fin}}(\cM)$, then $\tau' \circ \varphi$ belongs to $T_{\mathrm{fin}}(\cA)$.

(ii). Denote, as above, the $n$th coordinate map $\cM \to M_{k(n)}(\C)$ by $\pi_n$. Each of the tracial states  $\mathrm{tr}_{k(n)} \circ \pi_n \circ \varphi$ extends to the tracial state $\mathrm{tr}_{k(n)}  \circ \pi_n$ on $\cM$. By assumption, $\{\mathrm{tr}_{k(n)}  \circ \pi_n \circ \varphi  : n \ge 1\}$ is dense in $\partial_e T(\cA) \cap T_\mathrm{fin}(\cA)$. The set of tracial states on $\cA$ that extend to a tracial state on $\cM$ is closed and convex. (Indeed, it is equal to the image of the 
continuous affine homomorphism $T(\cM) \to T(\cA)$ induced by the embedding $\varphi \colon \cA \to \cM$.) We may therefore conclude from Proposition~\ref{prop:fdtrace1} (ii) that each tracial state in the closure of $T_\mathrm{fin}(\cA)$ extends to a tracial state on $\cM$. The other inclusion follows from (i).
\end{proof}

\noindent Proposition~\ref{prop:extendtraces} raises the question when  a tracial state on a unital sub-\Cs{} $\cB$ of a unital \Cs{} $\cA$ can be extended to $\cA$. This is well-understood when $\cB \subseteq \cA$ are von Neumann algebras: each tracial state on $\cB$ extends to a tracial state on $\cA$ if and only if finite central projections in $\cA$ separate finite central projections in $\cB$, i.e., if $p,p'$ are distinct finite central projections in $\cB$, then there exists a finite central projection $q$ in $\cA$ such that $pq$ is zero and $p'q$ is non-zero, or vice versa. The corresponding question for \Cs s is more  subtle: Take $\cB$ to be a unital \Cs{} with a faithful extremal tracial state $\tau$. Then $\cB \subseteq \pi_\tau(\cB)''$, and $\pi_\tau(\cB)''$ is a type II$_1$-factor. Hence $\tau$ is the only tracial state on $\cB$ that extends to a tracial state on 
$\pi_\tau(\cB)''$, while $\cB$ need not have unique trace (even for simple \Cs s $\cB$). We pursue this issue  in Example~\ref{ex:extend} below. 

As it was remarked in \cite[Section 2.1]{ESS-C*-stablegroups} (see also relevant definitions therein), a matricially weakly semiprojective \Cs{}  is residually finite dimensional if and only if it is quasi-diagonal if and only if it is MF (defined in \cite{BlaKir:MF}). Every weakly semiprojective \Cs{} is also 
matricially weakly semiprojective. 

\begin{proposition} \label{prop:semiprojective}
Let $\cA$ be a unital (matricially) weakly semiprojective \Cs. Then 
$\overline{T_{\mathrm{fin}}(\cA)} = T_{\mathrm{qd}}(\cA)$. 
\end{proposition}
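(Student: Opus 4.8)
The plan is to establish the only non-trivial inclusion, namely $T_{\mathrm{qd}}(\cA) \subseteq \overline{T_{\mathrm{fin}}(\cA)}$; the reverse inclusion $\overline{T_{\mathrm{fin}}(\cA)} \subseteq T_{\mathrm{qd}}(\cA)$ is one of the links in the chain of inclusions recorded before Proposition~\ref{prop:RFD-T_fin} and holds for every unital \Cs. So fix $\tau \in T_{\mathrm{qd}}(\cA)$. Unwinding the definition, there are integers $k_n \ge 1$ and a u.c.p.\ map $\Phi = (\varphi_n)_n \colon \cA \to \prod_n M_{k_n}(\C)$ whose composition $\rho$ with the quotient map onto $\cQ := \prod_n M_{k_n}(\C)/\bigoplus_n M_{k_n}(\C)$ is a unital \sh, together with a tracial state $\bar\tau$ on $\cQ$ with $\tau = \bar\tau \circ \rho$. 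I would at this point pass to the equivalent, and more convenient, pointwise formulation: the maps $\varphi_n$ may be taken asymptotically multiplicative with $\mathrm{tr}_{k_n} \circ \varphi_n \to \tau$ in the weak$^*$ topology, the passage between the two descriptions being the standard block-diagonal averaging argument for quasi-diagonal traces.

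The heart of the proof, and the only place where the hypothesis is used, is to upgrade the approximately multiplicative maps $\varphi_n$ to genuine \sh s, which is exactly what (matricial) weak semiprojectivity delivers. Applying it to the unital \sh{} $\rho \colon \cA \to \cQ$ — legitimately, since $\cQ$ is a quotient of a product of matrix algebras, so that the matricial, rather than the full, form of the hypothesis is what is required — yields unital \sh s $\psi_n \colon \cA \to M_{k_n}(\C)$, defined along a subsequence of indices, such that $(\psi_n(a))_n$ again represents $\rho(a)$ in $\cQ$ for each $a$; equivalently, $\|\psi_n(a) - \varphi_n(a)\| \to 0$ for every $a \in \cA$ along that subsequence.

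Reading this off at the level of traces finishes the argument. Each $\sigma_n := \mathrm{tr}_{k_n} \circ \psi_n$ is a genuine finite dimensional tracial state, since $\psi_n$ is an honest \sh{} into $M_{k_n}(\C)$, so $\sigma_n \in T_{\mathrm{fin}}(\cA)$; and from $\|\psi_n(a) - \varphi_n(a)\| \to 0$ we get $|\sigma_n(a) - \mathrm{tr}_{k_n}(\varphi_n(a))| \to 0$, whence $\sigma_n \to \tau$ weak$^*$ along the subsequence. Therefore $\tau \in \overline{T_{\mathrm{fin}}(\cA)}$. I expect the main obstacle to be precisely the second step — the conversion of asymptotically multiplicative u.c.p.\ maps into genuine \sh s — which is the entire reason the (matricial) weak semiprojectivity hypothesis is imposed. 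The one bookkeeping point to watch is that this lifting is in general only available along a subsequence; adopting the pointwise formulation of quasi-diagonality at the outset neutralises this, since a subsequence of a weak$^*$-convergent sequence still converges to $\tau$, whereas in the formulation through $\bar\tau$ one would additionally have to match the subsequence to the mean on $\N$ underlying $\bar\tau$ (or else recover $\tau$ as a $\mu$-average of the $\sigma_n$ and invoke a Hahn--Banach separation argument to place it in the weak$^*$-closed convex hull of $\{\sigma_n\} \subseteq T_{\mathrm{fin}}(\cA)$).
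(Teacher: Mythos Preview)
Your argument is correct and shares with the paper the decisive step: apply matricial weak semiprojectivity to lift the \sh{} $\rho \colon \cA \to \prod_n M_{k_n}(\C)/\bigoplus_n M_{k_n}(\C)$ to a genuine \sh{} $\Psi = (\psi_n) \colon \cA \to \prod_n M_{k_n}(\C)$. The two proofs diverge only in how $\tau$ is then placed in $\overline{T_{\mathrm{fin}}(\cA)}$. The paper does not reformulate anything and never uses the u.c.p.\ lift $\Phi$: once $\Psi$ exists, $\tau = (\bar\tau \circ \pi) \circ \Psi$ factors through $\prod_n M_{k_n}(\C)$, and Proposition~\ref{prop:extendtraces}~(i) (hence Ozawa's density result, Proposition~\ref{prop:Ozawa}) finishes in one line. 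You instead front-load the work, first passing to the pointwise characterisation of quasi-diagonality so that $\mathrm{tr}_{k_n} \circ \varphi_n \to \tau$; after lifting, $\sigma_n = \mathrm{tr}_{k_n} \circ \psi_n \to \tau$ follows directly, with no appeal to the trace structure of the full product. Your route is self-contained relative to the rest of the section, while the paper's is shorter because it recycles Proposition~\ref{prop:extendtraces}; the underlying content (averaging over $\N$ via the center-valued trace) is essentially the same, just relocated from the end of the argument to the beginning. Your caution about subsequences is harmless but slightly overstated: the lift $\Psi$ gives \sh s $\psi_n$ for every $n$, with $\psi_n(1_\cA) = 1$ for all large $n$, so a tail rather than a subsequence suffices.
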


\begin{proof} If $\tau \in T_{\mathrm{qd}}(\cA)$, then $\tau$ factors through  $\cM:=\prod_{n=1}^\infty M_{k(n)}(\C) / \bigoplus_{n=1}^\infty M_{k(n)}(\C)$ via a \sh{} $\varphi$ and a tracial state $\tau_0$ on $\cM$, for some sequence of positive integers $k(n)$. 
Since $\cA$ is matricially weakly semiprojective, $\varphi$ lifts to a \sh{} $\psi$:
$$
\xymatrix@C+1pc@R+1pc{&  \prod_{n=1}^\infty M_{k(n)}(\C)\ar[d]^\pi & \\ \cA \ar@{-->}[ur]^-\psi \ar[r]_-\varphi & 
\prod_{n=1}^\infty M_{k(n)}(\C) / \bigoplus_{n=1}^\infty M_{k(n)}(\C) \ar[r]_-{\tau_0}& \C.}
$$
Hence $\tau$ factors through $\prod_{n=1}^\infty M_{k(n)}(\C)$, so it belongs to $\overline{T_{\mathrm{fin}}(\cA)}$ by Proposition~\ref{prop:extendtraces}~(i).
\end{proof}

\noindent There exists unital residually finite dimensional \Cs s $\cA$ for which $\overline{T_{\mathrm{fin}}(\cA)} \ne T_{\mathrm{qd}}(\cA)$, see \cite[Example 3.11]{HadShul:stability} by Hadwin and Shulman. As an  application of Proposition~\ref{prop:extendtraces}, we exhibit here a larger class of \Cs s with these properties.

\begin{example} \label{ex:extend}
Take a unital MF-algebra $\cB$ witnessed by an embedding $\varphi$ as in the diagram:
$$\xymatrix{ 0 \ar[r] & \bigoplus_{n \ge 1} M_{k(n)}(\C) \ar[r] \ar@{=}[d]& \cA \ar@{^{(}->}[d] \ar[r]^-{\rho \, = \, \varphi^{-1} \circ \pi} & \cB \ar[r] \ar[d]_\varphi  & 0\\
0 \ar[r] & \bigoplus_{n \ge 1} M_{k(n)}(\C) \ar[r]&  \prod_{n \ge 1} M_{k(n)}(\C)
  \ar[r]^-\pi &  \frac{ \prod_{n\ge 1} M_{k(n)}(\C)}{\bigoplus_{n\ge 1} M_{k(n)}(\C)} \ar[r]  & 0
.}$$
The pull-back \Cs{} $\cA := \pi^{-1}(\varphi(\cB))$ is then unital and residually finite dimensional.  If $\cB$ has no finite dimensional representations, then the inclusion of $\cA$ into $\prod_{n=1}^\infty M_{k(n)}(\C)$ is saturated. Each tracial state on $\cB$ gives rise to a tracial state on $\cA$ by composition with $\rho$. The resulting tracial state on $\cA$ will not always extend to a tracial state on  $\prod_{n=1}^\infty M_{k(n)}(\C)$, as shown below.

Take now $\cB$ to be a unital MF-algebra with no finite dimensional representations, and which admits a faithful quasi-diagonal tracial state $\tau$. Then there is a sequence $\{k(n)\}_{n \ge 1}$ of positive integers and u.c.p.\ maps $\mu_n \colon \cB \to M_{k(n)}(\C)$, $n \ge 1$, such that for all $a,b \in \cB$,
$$\lim_{n\to\infty} \|\mu_n(ab)-\mu_n(a)\mu_n(b)\| = 0, \quad \lim_{n\to\infty}\|\mu_n(a)\| = \|a\|, \quad \lim_{n\to\infty} \mathrm{tr}_{k(n)}(\mu_n(a)) = \tau(a).$$
Hence $\varphi := \pi \circ \bigoplus_{n \ge 1} \mu_n$ is an injective \sh{} as in the diagram above. Let also $\cA$ be as above. If $\tau'$ is a tracial state on $\cB$, then $\tau' \circ \rho$ extends to a trace on $ \prod_{n\ge 1} M_{k(n)}(\C)$ if and only if $\tau=\tau'$. Indeed, if $\sigma$ is a tracial state on $ \prod_{n\ge 1} M_{k(n)}(\C)$ that extends $\tau' \circ \rho$, then, e.g., by Proposition~\ref{prop:Ozawa} and its proof, $\sigma(\{x_n\}) = \omega(\{ \mathrm{tr}_{k(n)}(x_n)\})$, for all $\{x_n\}_{n \ge 1} \in \prod_{n\ge 1} M_{k(n)}(\C)$, for some state $\omega$ on $\ell^\infty(\N)$, which vanishes on $c_0(\N)$, since $\tau' \circ \rho$, and therefore also $\sigma$, vanish on  $\bigoplus_{n\ge 1} M_{k(n)}(\C)$. It follows that
$$\tau'(b) = (\tau' \circ \rho)(\{\mu_n(b)\}) =  \sigma(\{\mu_n(b)\}) = \omega( \{\mathrm{tr}_{k(n)}(\mu_n(b)\}) = \tau(b),$$
for all $b \in \cB$. 

We conclude that if $\tau' \in T(\cB)$ and $\tau' \ne \tau$, then $\tau' \circ \rho$ does not extend to $ \prod_{n\ge 1} M_{k(n)}(\C)$, whence $\tau' \circ \rho$ does not belong to $\overline{T_{\mathrm{fin}}(\cA)}$, cf.\ Proposition~\ref{prop:extendtraces}, while $\tau' \circ \rho$ does belong to $T_{\mathrm{qd}}(\cA)$ whenever $\cB$, moreover, is quasi-diagonal and nuclear, and $\tau'$ is faithful.
\end{example}

\noindent
We now turn our interest to the particular example of the unital universal free product $M_n(\C) *_\C M_n(\C)$ of two copies of the full matrix algebra $M_n(\C)$. It was shown in \cite{ExelLoring:free_products} that $M_n(\C) *_\C M_n(\C)$ is residually finite dimensional, while Blackadar proved that it is semiprojective, see \cite[Corollary 2.28 and Proposition 2.31]{Bla:shape}.

\begin{lemma} \label{lm:lift-M_n} Let $n \ge 1$ be an integer and let $\pi \colon \cA \to \cB$ be a surjective \sh{} between unital \Cs s $\cA$ and $\cB$. Suppose, furthermore, that the following conditions hold:
\begin{enumerate}
\item[\rm{(a)}] the unitary group of $\cB$ is connected;
\item[\rm{(b)}] whenever $p,q \in \cB$ are projections such that the $n$-fold direct sum of $p$ is equivalent to the $n$-fold direct sum of $q$, then $p \sim q$;
\item[\rm{(c)}] there is  a unital embedding of $M_n(\C)$ into $\cA$.
\end{enumerate}
Then any unital \sh{} $M_n(\C) \to \cB$ lifts to a unital \sh{} $M_n(\C) \to \cA$, and any unital \sh{} $M_n(\C) *_\C M_n(\C) \to \cB$ lifts to a unital \sh{} $M_n(\C) *_\C M_n(\C) \to \cA$.
\end{lemma}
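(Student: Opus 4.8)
The plan is to handle a single copy of $M_n(\C)$ first, and then deduce the free-product statement formally from the universal property. Recall that a unital $*$-homomorphism $M_n(\C) \to \cB$ is the same datum as a system of $n\times n$ matrix units in $\cB$ whose diagonal sums to $1_\cB$, so lifting it means producing such a system in $\cA$ that maps onto the given one. Rather than lift matrix units directly, I would compare the given homomorphism $\phi \colon M_n(\C) \to \cB$ against a fixed reference. Condition (c) supplies a unital embedding $\iota \colon M_n(\C) \hookrightarrow \cA$, and composing with $\pi$ gives a second unital homomorphism $\pi\circ\iota \colon M_n(\C) \to \cB$. The strategy is to show that $\phi$ and $\pi\circ\iota$ differ by conjugation by a unitary of $\cB$ which lifts to $\cA$, and then to transport $\iota$ by the lifted unitary.

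The first step is to prove that the $(1,1)$-diagonal projections of $\phi$ and of $\pi\circ\iota$ are Murray--von Neumann equivalent in $\cB$. Writing $\{e_{ij}\}$ and $\{g_{ij}\} = \{\pi(\iota(E_{ij}))\}$ for the two systems of matrix units, each $e_{ii}$ is equivalent to $e_{11}$ via $e_{i1}$ and $\sum_i e_{ii}=1_\cB$; combining these equivalences with a column-isometry argument shows that the $n$-fold direct sum $e_{11}\oplus\cdots\oplus e_{11}$ is equivalent in $M_n(\cB)$ to $\mathrm{diag}(1_\cB,0,\ldots,0)$, and likewise for $g_{11}$. Hence the $n$-fold sums of $e_{11}$ and $g_{11}$ are equivalent, and condition (b) yields $e_{11}\sim g_{11}$ in $\cB$. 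A partial isometry $s$ with $s^*s=g_{11}$, $ss^*=e_{11}$ then produces a unitary $u=\sum_i e_{i1}\, s\, g_{1i}\in\cB$ satisfying $u\, g_{ij}\, u^* = e_{ij}$, that is, $\phi = \mathrm{Ad}(u)\circ(\pi\circ\iota)$.

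Next I would invoke condition (a) to lift $u$. Since the unitary group of $\cB$ is connected, $u$ is a finite product of exponentials $\exp(i h_1)\cdots\exp(i h_m)$ with $h_j=h_j^*\in\cB$. Self-adjoint elements always lift along a surjection (average an arbitrary lift with its adjoint), so choosing self-adjoint lifts $\tilde h_j\in\cA$ and setting $\tilde u = \exp(i\tilde h_1)\cdots\exp(i\tilde h_m)$ gives a unitary of $\cA$ with $\pi(\tilde u)=u$. Then $\tilde\phi := \mathrm{Ad}(\tilde u)\circ\iota \colon M_n(\C)\to\cA$ is a unital $*$-homomorphism, and $\pi\circ\tilde\phi = \mathrm{Ad}(u)\circ(\pi\circ\iota) = \phi$, so $\tilde\phi$ is the required lift.

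Finally, the free-product assertion follows formally: by the universal property of $M_n(\C)*_\C M_n(\C)$, a unital $*$-homomorphism $M_n(\C)*_\C M_n(\C)\to\cB$ is exactly a pair $(\phi_1,\phi_2)$ of unital $*$-homomorphisms $M_n(\C)\to\cB$; lifting each separately by the above to $\tilde\phi_1,\tilde\phi_2\colon M_n(\C)\to\cA$ and applying the universal property again produces a unital $*$-homomorphism $M_n(\C)*_\C M_n(\C)\to\cA$ whose composition with $\pi$ agrees with the given map on both generating copies, hence everywhere. I expect the only genuinely delicate point to be the passage $e_{11}\sim g_{11}$: organizing the matrix-unit data into a hypothesis of the exact shape required by (b), namely equivalence of $n$-fold direct sums, is where one must be careful, whereas the unitary-lifting step and the universal-property argument are routine.
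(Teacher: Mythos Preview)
Your proof is correct and follows essentially the same approach as the paper: compare the given map with a reference homomorphism coming from (c), use (b) to get $e_{11}\sim g_{11}$, deduce unitary equivalence of the two homomorphisms, lift the unitary via (a), and then handle the free product by the universal property. The only difference is cosmetic: the paper cites a reference for the unitary-equivalence step and simply asserts that (a) allows the unitary to lift, whereas you spell out the construction of $u$ from the partial isometry $s$ and the exponential-factorization argument for lifting; both are standard and your details are accurate.
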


\begin{proof} Fix a unital \sh{} $\beta \colon M_n(\C) \to \cB$, and pick any unital \sh{} $\alpha' \colon M_n(\C) \to \cA$, cf.\ (c). Set $\beta' = \pi \circ \alpha'$. It follows from assumption (b) that $\beta(e_{11}) \sim \beta'(e_{11})$, where $e_{ij}$, $1 \le i,j \le n$, are the standard matrix units for $M_n(\C)$. It is a well-known fact, see, e.g., 
\cite[Lemma 7.3.2(ii)]{RorLarLau:k-theory}, that $\beta$ and $\beta'$ are unitarily equivalent, i.e., there is a unitary $u \in \cB$ such that $u\beta'(a)u^* = \beta(a)$, for all $a \in \cA$. By (a), $u$ lifts to a unitary $v \in \cA$. It follows that $\alpha \colon \cA \to M_n(\C)$ given by $\alpha(a) = v\alpha'(a)v^*$, $a \in \cA$, is a lift of $\beta$.

By the universal property of free products, there is a bijective correspondence between unital \sh s from $M_n(\C) *_\C M_n(\C)$ into a given unital \Cs{} and pairs of unital \sh s from $M_n(\C)$ into the same unital \Cs. The second statement about $M_n(\C) *_\C M_n(\C)$  follows therefore from the first one.
\end{proof}

\begin{theorem} \label{thm:Tfin(MnMn)} The closure of $T_{\mathrm{fin}}(M_n(\C) *_\C M_n(\C))$ is equal to $T_{\mathrm{hyp}}(M_n(\C) *_\C M_n(\C))$.
\end{theorem}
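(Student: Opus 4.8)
The inclusion $\overline{T_{\mathrm{fin}}(\cA)} \subseteq T_{\mathrm{hyp}}(\cA)$, where I write $\cA = M_n(\C) *_\C M_n(\C)$, holds for every unital \Cs{} by the chain of inclusions recorded after Proposition~\ref{prop:fdtrace1}. Hence the whole content lies in the reverse inclusion $T_{\mathrm{hyp}}(\cA) \subseteq \overline{T_{\mathrm{fin}}(\cA)}$, and the plan is to show that every hyperlinear trace on $\cA$ factors through a countable $\ell^\infty$-product $\prod_j M_{k_j}(\C)$, and then to invoke Proposition~\ref{prop:extendtraces}~(i).

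So let $\tau \in T_{\mathrm{hyp}}(\cA)$. First I would reduce the ancilla from $\cR^\omega$ to a matrix ultraproduct. By definition $\pi_\tau(\cA)''$ embeds trace-preservingly into $\cR^\omega$, and Connes' embedding provides a trace-preserving embedding of $\cR$ into some matrix ultraproduct $\prod_\omega M_{q_i}(\C)$; passing to ultrapowers and reindexing the resulting iterated ultraproduct over $\N \times \N$ (with the product ultrafilter, relabelled by $\N$) as a single matrix ultraproduct, one obtains a trace-preserving unital \sh{} $\phi \colon \cA \to \cQ$, where $\cQ := \prod_\omega M_{k_j}(\C)$ is a tracial ultraproduct of matrix algebras. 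After replacing $k_j$ by $n k_j$ if necessary, I may assume $n \mid k_j$ for all $j$.

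The heart of the argument is then the lifting lemma. Let $\pi \colon \prod_{j} M_{k_j}(\C) \to \cQ$ be the quotient \sh{} from the $\ell^\infty$-product onto the tracial ultraproduct, and write $\mathrm{tr}_\cQ$ for the canonical trace on $\cQ$. I would apply Lemma~\ref{lm:lift-M_n} to $\pi$: the target $\cQ$ is a finite factor, so that (a) its unitary group is connected (in a finite von Neumann algebra every unitary is an exponential of a self-adjoint element), and (b) its projections are classified by the trace, whence $np \sim nq$ forces $\mathrm{tr}_\cQ(p) = \mathrm{tr}_\cQ(q)$ and hence $p \sim q$; moreover (c) $M_n(\C)$ embeds unitally into $\prod_j M_{k_j}(\C)$ because $n \mid k_j$. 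The lemma therefore lifts $\phi$ to a unital \sh{} $\tilde\phi \colon \cA \to \prod_j M_{k_j}(\C)$ with $\pi \circ \tilde\phi = \phi$. Since the map $\Theta \colon \{x_j\} \mapsto \lim_\omega \mathrm{tr}_{k_j}(x_j)$ is a tracial state on $\prod_j M_{k_j}(\C)$ satisfying $\Theta = \mathrm{tr}_\cQ \circ \pi$, we get $\Theta \circ \tilde\phi = \mathrm{tr}_\cQ \circ \phi = \tau$; that is, $\tau$ factors through $\prod_j M_{k_j}(\C)$. By Proposition~\ref{prop:extendtraces}~(i) this places $\tau$ in $\overline{T_{\mathrm{fin}}(\cA)}$, completing the inclusion.

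The step I expect to be the main obstacle is the reduction in the second paragraph: turning the defining factorization through $\cR^\omega$ into a factorization through a single matrix ultraproduct, i.e.\ justifying that the iterated ultraproduct arising from $\cR \hookrightarrow \prod_\omega M_{q_i}(\C)$ is again a matrix ultraproduct over a countable index set. Everything else is either the general inclusion chain, a routine verification of the three hypotheses of Lemma~\ref{lm:lift-M_n} (all immediate for a finite factor once $n \mid k_j$ is arranged), or a direct appeal to Proposition~\ref{prop:extendtraces}~(i). It is worth stressing that the special feature of $M_n(\C) *_\C M_n(\C)$ entering the proof is precisely the lifting property encoded in Lemma~\ref{lm:lift-M_n}; this is what fails for the general residually finite dimensional \Cs s of Proposition~\ref{prop:Nate}, where $\overline{T_{\mathrm{fin}}}$ is strictly smaller than $T_{\mathrm{hyp}}$.
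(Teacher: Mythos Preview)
Your argument is correct, and it shares with the paper's proof the essential idea of invoking Lemma~\ref{lm:lift-M_n} to lift a \sh{} out of $M_n(\C) *_\C M_n(\C)$ along a surjection onto a finite von Neumann algebra. The execution, however, is genuinely different. The paper does \emph{not} first pass to a matrix ultraproduct: it applies Lemma~\ref{lm:lift-M_n} directly to the surjection $\prod_{k} \cQ \twoheadrightarrow \cR^\omega$ (with $\cQ$ the universal UHF algebra, $\|\cdot\|_2$-dense in $\cR$), obtains $\tau$ as an $\omega$-limit of traces factoring through $\cQ$, deduces that $\tau$ is quasi-diagonal, and then invokes Proposition~\ref{prop:semiprojective} (which uses Blackadar's semiprojectivity of $M_n(\C)*_\C M_n(\C)$) to conclude $\tau\in\overline{T_{\mathrm{fin}}}$. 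Your route instead reduces $\cR^\omega$ to a matrix ultraproduct, lifts to the $\ell^\infty$-product of matrices, and finishes with Proposition~\ref{prop:extendtraces}(i). The trade-off is clear: the paper avoids your iterated-ultraproduct reduction at the price of an extra structural input (semiprojectivity, via Proposition~\ref{prop:semiprojective}); your version uses only Lemma~\ref{lm:lift-M_n} as the $M_n*_\C M_n$-specific ingredient, but must justify that a hyperlinear trace on a separable \Cs{} factors through some $\prod_\omega M_{k_j}$. That reduction is indeed standard --- your Fubini-ultrafilter reindexing works (the natural map $\prod_{\omega'\otimes\omega} M_{q_i} \to (\prod_\omega M_{q_i})^{\omega'}$ is a trace-preserving isomorphism), or one can argue more directly by approximating lifts to $\ell^\infty(\cR)$ coordinatewise inside a nested sequence of matrix subalgebras of $\cR$. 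One small remark: your verification of hypotheses (a) and (b) for $\prod_\omega M_{k_j}$ only uses that it is a finite von Neumann algebra (center-valued trace for (b), Borel functional calculus for (a)), so you need not insist it is a factor.
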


\begin{proof} Let $\tau \in T_{\mathrm{hyp}}(M_n(\C) *_\C M_n(\C))$. By the definition of hyperlinear traces, there is a unital embedding $\varphi \colon M_n(\C) *_\C M_n(\C) \to \cR^\omega$ such that $\tau = \tau_{\cR^\omega} \circ \varphi$. Let $\cQ$ denote the universal UHF algebra, and view it as a dense subalgebra of the hyperfinite II$_1$-factor $\cR$, with respect to $\| \, \cdot \, \|_2$. Composing the inclusion $ \prod_{k=1}^\infty \cQ  \to  \prod_{k=1}^\infty \cR$ with the natural surjection from $ \prod_{k=1}^\infty \cR$ onto $\cR^\omega$ yields the \sh{} $\pi$ in the following diagram:
$$
\xymatrix{& \prod_{k=1}^\infty \cQ \ar[d]^-\pi \\ M_n(\C) *_\C M_n(\C) \ar@{-->}[ur]^-\psi \ar[r]_-\varphi & \cR^\omega.}
$$
Since $\cQ$ is $\| \, \cdot \, \|_2$-dense in $\cR$, we see that $\pi$ is surjective. The \sh{} $\varphi$ lifts to a \sh{} $\psi = \bigoplus_{k=1}^\infty \psi_k$, by Lemma~\ref{lm:lift-M_n}. Moreover,
$(\tau_{\cR^\omega} \circ \pi)(\{b_k\}_{k \ge 1}) = \lim_{k \to \omega} \tau_\cQ(b_k)$, for all $\{b_k\}_{k \ge 1} \in \prod_{k=1}^\infty \cQ$. It follows that
$$\tau = \tau_{\cR^\omega} \circ \varphi = \tau_{\cR^\omega} \circ \pi \circ \psi = \lim_{k \to \omega} \tau_\cQ \circ \psi_k.$$
This shows that $\tau$ is the limit of a net of tracial states that factor through the universal UHF-algebra $\cQ$. As every tracial state that factors through $\cQ$ is quasi-diagonal, and the set of quasi-diagonal traces is closed, we conclude that $\tau$ is quasi-diagonal. By Proposition~\ref{prop:semiprojective}, this completes the proof.
\end{proof}

\noindent The theorem above implies that the set of finite dimensional tracial states on $M_n(\C) *_\C M_n(\C)$ is weak$^*$-dense if the Connes Embedding Problem has an affirmative answer.

\section{Factorizable channels and the Connes Embedding Problem: a new viewpoint} \label{sec:Choi}

\noindent We give in this section a new characterization of factorizable channels in terms of certain properties of their Choi matrix, that bears resemblance with the matrices of quantum correlations that appear in Tsirelson's conjecture. From this viewpoint, we then establish a new link to  the Connes Embedding Problem. 

Keeping consistent notation with previous papers on this topic, let $T\colon M_n(\C) \to M_n(\C)$ be a linear map. One associates to it its Choi matrix
$$C_T = \sum_{i,j=1}^n e_{ij} \otimes T(e_{ij}) \in M_n(\C) \otimes M_n(\C),$$
where $e_{ij}$, $1 \le i,j \le n$, are, as before, the standard matrix units for $M_n(\C)$.
Choi's celebrated theorem, \cite{Choi:cp}, states that $T$ is completely positive if and only if $C_T$ is a positive matrix. Furthermore, we can recover $T$ from the matrix $C_T$ by the formula
\begin{equation} \label{eq:T-CT}
T(e_{ij}) = \sum_{k,\ell=1}^n C_T(i,j;k,\ell) \,  e_{k\ell}, \qquad 1 \le i,j  \le n,
\end{equation}
 where $C_T(i,j;k,\ell)$ are the matrix coefficients of $C_T$, cf.\ \eqref{eq:CT-coeff} below, which we briefly justify: Equip the vector space $M_n(\C)$ with the inner product $\langle \, \cdot \, , \, \cdot \, \rangle_{\Tr_n}$ coming from the standard  trace $\Tr_n$ on $M_n(\C)$. (We reserve the notation $\mathrm{tr}_n$  for the normalized trace on $M_n(\C)$.) The set of standard matrix units $\{e_{ij}\}$ is then an orthonormal basis for $M_n(\C)$, and 
\begin{equation} \label{eq:CT-coeff}
C_T(i,j;k,\ell) = \langle T(e_{ij}), e_{k\ell}\rangle_{\Tr_n} = \langle C_T, e_{ij} \otimes e_{k\ell} \rangle_{\Tr_n \otimes \Tr_n}.
\end{equation}

A unital completely positive trace-preserving map  $T \colon M_n(\C) \to M_n(\C)$ is \emph{factorizable}, cf.\ \cite{A-D:factorizable}, if there exist a finite von Neumann algebra $M$ with normal faithful tracial state $\tau_M$ and unital \sh s $\alpha,\beta \colon M_n(\C) \to M$ such that $T = \beta^* \circ \alpha$, where $\beta^* \colon M \to M_n(\C)$ is the adjoint of $\beta$. The map $\beta^*$ is formally defined by the identity
$\langle \beta(x), y \rangle_{\tau_M} = \langle x, \beta^*(y) \rangle_{\mathrm{tr_n}}$,  for $x \in M_n(\C)$ and
$y \in M$,
and it is obtained  by composing the (unique) trace-preserving conditional expectation $E \colon M \to \beta(M_n(\C))$ with $\beta^{-1}$, see \cite{HaaMusat:CMP-2011}. In this case, $T$ is said to \emph{exactly factor through} $(M,\tau_M)$. 

As explained in \cite{HaaMusat:CMP-2011}, if $T$ factors through $(M,\tau_M)$, then we may write $M = M_n(\C) \otimes N$, for some ancillary finite von Neumann algebra $(N,\tau_N)$, and we may take $\beta$ to be given by $\beta(x) = x \otimes 1_N$, for $x \in M_n(\C)$. In this case, $\alpha(x) = u( x \otimes 1_N)u^*$, for some unitary $u \in M_n(\C) \otimes N$, and $T(x) = ( \mathrm{id}_{n} \otimes \tau_N)(u(x \otimes 1_N)u^*)$, for 
$x \in M_n(\C)$. This gives a more transparent  definition of $T$ being factorizable. The finite von Neumann algebra $N$ above is called the \emph{ancilla}. The reader should be warned that the ancilla is far from being unique, and determining the ``minimal'' ancilla for a given factorizable map $T$ seems to be a difficult task.

We shall now rephrase the notion of factorizability of a linear map $T \colon M_n(\C) \to M_n(\C)$ in terms of a certain property of its associated Choi matrix. 

\begin{proposition} \label{prop:coordinates} 
Let $n \ge 2$ be an integer, and let $T \colon M_n(\C) \to M_n(\C)$ be a linear map with Choi matrix $C_T =\big[C_T(i,j;k,\ell)\big]_{(i,k),(j,\ell)}$ as above. Then the following are equivalent:
\begin{enumerate}
\item $T$ is factorizable.
\item There is a von Neumann algebra $M$ with normal faithful tracial state $\tau_M$, a unital \sh{} $\alpha \colon M_n(\C) \to M$,  and a set of matrix units $\{f_{ij}\}_{i,j=1}^n$  in $M$, so that 
$$T(x) = \sum_{i,j=1}^n n \,  \langle \alpha(x), f_{ij} \rangle_{\tau_M} \; e_{ij}, \qquad x \in M_n(\C).$$
\item There is a von Neumann algebra $M$ with normal faithful tracial state $\tau_M$ and sets of  matrix units $\{f_{ij}\}_{i,j=1}^n$ and $\{g_{ij}\}_{i,j=1}^n$ in $M$, so that $$n^{-1} C_T(i,j;k,\ell) =\tau_M(f_{k \ell}^* \, g_{ij}), \qquad 1 \le i,j,k,\ell \le n.$$
\item There is a tracial state $\tau$ on the unital free product \Cs{} $M_n(\C) \! *_\C \! M_n(\C)$, so that 
\begin{equation} \label{eq:T-tau}
n^{-1} C_T(i,j;k,\ell) =\tau(\iota_2(e_{k\ell})^*\iota_1(e_{ij})), \qquad 1 \le i,j,k,\ell \le n,
\end{equation} 
where $\iota_1$ and $\iota_2$ are the two canonical inclusions of $M_n(\C)$ into 
$M_n(\C) \! *_\C \! M_n(\C)$. 
\end{enumerate}
\end{proposition}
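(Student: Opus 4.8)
The plan is to prove the three equivalences (i)$\Leftrightarrow$(ii), (ii)$\Leftrightarrow$(iii), and (iii)$\Leftrightarrow$(iv), each by a short and reversible computation; together they yield the full equivalence. The organizing dictionary is that the second unital \sh{} $\beta$ in a factorization $T = \beta^*\circ\alpha$ is encoded by the matrix units $f_{ij} = \beta(e_{ij})$, the first \sh{} $\alpha$ is encoded either as itself or by the matrix units $g_{ij} = \alpha(e_{ij})$, and in (iv) the pair $(\alpha,\beta)$ is assembled into a single \sh{} out of the free product by its universal property. I would also note at the outset that each of (ii)--(iv) exhibits $T$ as a composite of unital \sh s with the adjoint of a unital \sh, so any one of them already forces $T$ to be a unital completely positive trace-preserving map; there is thus no loss in freely invoking factorizability once a condition holds. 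A recurring point is that a von Neumann algebra carrying a faithful normal tracial state is automatically finite, so every $M$ appearing in (ii)--(iv) is an admissible factorizing algebra.

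For (i)$\Leftrightarrow$(ii), I would start from $T=\beta^*\circ\alpha$ and expand $T(x)$ in the $\langle\,\cdot\,,\,\cdot\,\rangle_{\Tr_n}$-orthonormal basis $\{e_{ij}\}$. Using $\Tr_n = n\,\mathrm{tr}_n$ and the defining adjoint identity, which rearranges by conjugate symmetry to $\langle \beta^*(y),z\rangle_{\mathrm{tr}_n} = \langle y,\beta(z)\rangle_{\tau_M}$, one gets $\langle T(x),e_{ij}\rangle_{\Tr_n} = n\,\langle\alpha(x),\beta(e_{ij})\rangle_{\tau_M}$; setting $f_{ij}=\beta(e_{ij})$ gives exactly the formula in (ii). Conversely, given $\alpha$ and matrix units $\{f_{ij}\}$, the assignment $e_{ij}\mapsto f_{ij}$ defines a unital \sh{} $\beta$, and running the computation backwards recovers $T=\beta^*\circ\alpha$.

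For (ii)$\Leftrightarrow$(iii), I would simply read off the Choi coefficients from (ii): taking $x=e_{ij}$ and pairing against $e_{k\ell}$ in \eqref{eq:CT-coeff} yields $C_T(i,j;k,\ell) = n\,\langle\alpha(e_{ij}),f_{k\ell}\rangle_{\tau_M} = n\,\tau_M(f_{k\ell}^*\alpha(e_{ij}))$, so that $g_{ij}:=\alpha(e_{ij})$ produces the identity of (iii). The reverse direction defines $\alpha$ by $\alpha(e_{ij})=g_{ij}$ and uses the recovery formula \eqref{eq:T-CT} to reconstruct the formula in (ii). For (iii)$\Leftrightarrow$(iv), given the two sets of matrix units I would form the unital \sh s $e_{ij}\mapsto g_{ij}$ and $e_{ij}\mapsto f_{ij}$ and invoke the universal property of $M_n(\C)*_\C M_n(\C)$ to obtain $\Phi$ with $\Phi\circ\iota_1(e_{ij})=g_{ij}$ and $\Phi\circ\iota_2(e_{ij})=f_{ij}$; then $\tau:=\tau_M\circ\Phi$ is a tracial state satisfying \eqref{eq:T-tau}. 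For the converse, I would apply the tracial GNS construction to $\tau$: the generated von Neumann algebra $M:=\pi_\tau(M_n(\C)*_\C M_n(\C))''$ carries a faithful normal tracial state $\tau_M$ extending $\tau$, and $g_{ij}:=\pi_\tau(\iota_1(e_{ij}))$, $f_{ij}:=\pi_\tau(\iota_2(e_{ij}))$ are matrix units realizing (iii).

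The individual steps are short; the part I expect to require the most care --- and the likeliest source of error --- is the normalization bookkeeping between the unnormalized trace $\Tr_n$ (for which $\{e_{ij}\}$ is orthonormal) and the normalized trace $\mathrm{tr}_n$ (which enters the adjoint), since this mismatch is precisely what produces the factor $n$ throughout (ii)--(iv). A secondary point needing justification is that the GNS vector state in (iv)$\Rightarrow$(iii) is indeed a faithful normal trace on $\pi_\tau(\,\cdot\,)''$, so that $M$ qualifies in (iii). Neither is a genuine obstacle, but keeping the $n$-factor consistent across all four formulas is what I would verify most carefully.
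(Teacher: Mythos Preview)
Your proposal is correct and follows essentially the same route as the paper: the same cycle (i)$\Leftrightarrow$(ii)$\Leftrightarrow$(iii)$\Leftrightarrow$(iv), with $f_{ij}=\beta(e_{ij})$, $g_{ij}=\alpha(e_{ij})$, the universal property of the free product, and the GNS construction for (iv)$\Rightarrow$(iii). The only cosmetic difference is in (i)$\Leftrightarrow$(ii): the paper writes out the trace-preserving conditional expectation $E(x)=\sum_{i,j} n\langle x,f_{ij}\rangle_{\tau_M} f_{ij}$ and uses $T=\beta^{-1}\circ E\circ\alpha$, whereas you invoke the adjoint identity for $\beta^*$ directly; since $\beta^*=\beta^{-1}\circ E$, these are the same computation.
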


\begin{proof} (i) $\Leftrightarrow$ (ii). Suppose that $T$ is factors through a finite von Neumann algebra $M$ equipped with  normal faithful tracial state $\tau_M$ via unital \sh s $\alpha, \beta \colon M_n(\C) \to M$. Let $E \colon M \to \beta(M_n(\C))$ be the trace-preserving conditional expectation. Set $f_{ij} = \beta(e_{ij})$, for $1 \le i,j \le n$. Then $\{\sqrt{n} \, f_{ij}\}$ is an orthonormal basis for $\beta(M_n(\C))$ with respect to the inner product $\langle \, \cdot \, , \, \cdot \, \rangle_{\tau_M}$ on $M$, induced by  $\tau_M$. It follows that 
$$E(x) = \sum_{i,j=1}^n n \langle x, f_{ij} \rangle_{\tau_M} \, f_{ij}, \qquad x \in M.$$
This proves (ii), since $T = \beta^{-1} \circ E \circ \alpha$.

For the converse direction, let $\beta \colon M_n(\C) \to M$ be given by $\beta(e_{ij}) = f_{ij}$, $1 \le i,j \le n$. By reversing the argument above, one can verify that $T = \beta^{-1} \circ E \circ \alpha = \beta^* \circ \alpha$.

(ii) $\Leftrightarrow$ (iii). Let $M$, $\tau_M$ and $\alpha$  be as in (ii). For $1 \le i,j \le n$,  set $g_{ij} = \alpha(e_{ij})$. Then
\begin{eqnarray*}
C_T(i,j;k,\ell) &=&  \langle T(e_{ij}), e_{k\ell}\rangle_{\Tr_n}  = n \sum_{s,t=1}^n \langle \alpha(e_{ij}), f_{st} \rangle_{\tau_M} \cdot  \langle e_{st}, e_{k\ell}\rangle_{\Tr_n}
\\ &=& n \, \langle g_{ij}, f_{k\ell} \rangle_{\tau_M} = n  \, \tau_M(f_{k\ell}^* \, g_{ij}), \qquad 1 \le i,j,k,\ell \le n.
\end{eqnarray*}
Conversely, if (iii) holds, then define $\alpha \colon M_n(\C) \to M$ by $\alpha(e_{ij}) = g_{ij}$, $1 \le i,j \le n$. Then
$$T(e_{ij}) = \sum_{k,\ell=1}^n C_T(i,j;k,\ell) \, e_{k,\ell} =n   \sum_{k,\ell=1}^n \langle g_{ij}, f_{k\ell} \rangle_{\tau_M}  \, 
e_{k\ell} =n \,  \sum_{k,\ell=1}^n \langle \alpha(e_{ij}), f_{k\ell} \rangle_{\tau_M} \,  e_{k\ell}.
$$

(iii) $\Leftrightarrow$ (iv). Assuming that (iii) holds, let $\pi \colon M_n(\C) \! *_\C \! M_n(\C) \to M$ be the  \sh{} satisfying $\pi(\iota_1(e_{ij})) = g_{ij}$ and $\pi(\iota_2(e_{ij})) = f_{ij}$, $1 \le i,j \le n$. Set $\tau = \tau_M \circ \pi$. Then $\tau$ is a tracial state on $M_n(\C) \! *_\C \! M_n(\C)$, satisfying $\tau(\iota_2(e_{k\ell})^*\iota_1(e_{ij})) = \tau_M(f_{k \ell}^* \, g_{ij})$, $1 \le i,j,k,\ell \le n$.

Conversely, if (iv) holds with respect to some tracial state $\tau$ on $M_n(\C) \! *_\C \! M_n(\C)$, let $M$ be the finite von Neumann algebra $\pi_\tau(M_n(\C) \! *_\C \! M_n(\C))''$, equipped with the extension $\tau_M$ of $\tau$ to $M$. Then (iii) holds with $g_{ij} = \pi_\tau(\iota_1(e_{ij}))$ and $f_{ij} = \pi_\tau(\iota_2(e_{ij}))$, for  
$1 \le i,j \le n$.
\end{proof}

\noindent Note that by (iii) one can identify the set $\cFM(n)$ of factorizable maps on $M_n(\C)$, via their Choi matrix, with the set consisting of complex correlation matrices $ \big[\tau(f_{k\ell}^* \, g_{ij})\big]_{i,j,k,\ell}$, where $\{f_{k\ell}\}_{k,\ell}$ and $\{g_{ij}\}_{i,j}$ are systems $n \times n$ matrix units in some von Neumann algebra $(M,\tau)$. This latter set bears resemblance to the set of matrices of quantum correlations arising, for example, in Tsirelson's conjecture.

Using Proposition~\ref{prop:coordinates}, for $n \ge 2$ we can define a map 
$\Phi \colon T(M_n(\C) \! *_\C \! M_n(\C)) \to \cFM(n)$
by letting $T=\Phi(\tau)$ be the factorizable channel determined, via its Choi matrix, by \eqref{eq:T-tau}, for each tracial state $\tau$ on $M_n(\C) \! *_\C \! M_n(\C)$. More precisely, for all $x \in M_n(\C)$,
\begin{equation} \label{eq:Phi}
\Phi(\tau)(x) = \sum_{i,j=1}^n  n \, \tau(\iota_2(e_{ij})^* \, \iota_1(x)) \, e_{ij}.
\end{equation}

Following the notation of \cite{MusRor:infdim}, denote by $\cFM_{\mathrm{fin}}(n)$ the set of maps in $\cFM(n)$ that admit a factorization through a finite dimensional \Cs.

\begin{proposition} \label{prop:Phi} The map $\Phi \colon T(M_n(\C) \! *_\C \! M_n(\C)) \to \cFM(n)$ defined above is continuous, affine and surjective. Moreover,
\begin{enumerate}
\item $\mathcal{FM}_\mathrm{fin}(n) = \Phi\big(  T_\mathrm{fin}(M_n(\C) \! *_\C \! M_n(\C)) \big)$;
\item $\overline{\mathcal{FM}_\mathrm{fin}(n)} = \Phi\big( \overline{ T_\mathrm{fin}(M_n(\C) \! *_\C \! M_n(\C))} \big) = \Phi\big(  T_\mathrm{hyp}(M_n(\C) \! *_\C \! M_n(\C)) \big)$.
\end{enumerate}
 \end{proposition}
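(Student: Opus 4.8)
The plan is to prove Proposition~\ref{prop:Phi} in three stages: first establish the elementary properties (continuity, affinity, surjectivity) of $\Phi$, and then verify the two identifications~(i) and~(ii) by transporting between tracial states on $M_n(\C) *_\C M_n(\C)$ and factorizable maps. Continuity of $\Phi$ follows directly from the defining formula~\eqref{eq:Phi}: each matrix coefficient of $\Phi(\tau)$ is $n\,\tau(\iota_2(e_{ij})^*\iota_1(x))$, which is a weak$^*$-continuous affine function of $\tau$; since $\cFM(n)$ sits inside the finite-dimensional space of Choi matrices, weak$^*$-continuity of $\tau \mapsto \Phi(\tau)$ is immediate. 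Affinity is equally clear from the same linearity in $\tau$. Surjectivity is exactly the content of the implication (iv) $\Rightarrow$ (i) in Proposition~\ref{prop:coordinates}: every factorizable $T$ arises from \emph{some} tracial state $\tau$ on the free product via~\eqref{eq:T-tau}, so $\Phi(\tau) = T$.

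For part~(i), I would argue both inclusions. If $\tau \in T_\mathrm{fin}(M_n(\C) *_\C M_n(\C))$, then $\tau$ factors through a finite dimensional \Cs{} $\cB$, and taking $M = \pi_\tau(\,\cdot\,)''$ (which is finite dimensional) as the witnessing von Neumann algebra in the construction of Proposition~\ref{prop:coordinates}(iv) $\Rightarrow$ (i) shows that $\Phi(\tau) = T$ factors through a finite dimensional von Neumann algebra, i.e. $\Phi(\tau) \in \cFM_\mathrm{fin}(n)$. Conversely, if $T \in \cFM_\mathrm{fin}(n)$, then $T$ factors through a finite dimensional $(M,\tau_M)$, and the \sh{} $\pi \colon M_n(\C) *_\C M_n(\C) \to M$ built in the proof of Proposition~\ref{prop:coordinates}(iii) $\Rightarrow$ (iv) produces $\tau = \tau_M \circ \pi$, which is finite dimensional because it factors through the finite dimensional algebra $M$, and satisfies $\Phi(\tau) = T$. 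This gives the equality in~(i).

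For part~(ii), the second equality $\Phi(\overline{T_\mathrm{fin}}) = \Phi(T_\mathrm{hyp})$ is immediate from Theorem~\ref{thm:Tfin(MnMn)}, which asserts $\overline{T_\mathrm{fin}(M_n(\C) *_\C M_n(\C))} = T_\mathrm{hyp}(M_n(\C) *_\C M_n(\C))$; no further work is needed there. The substantive claim is the first equality, $\overline{\cFM_\mathrm{fin}(n)} = \Phi(\overline{T_\mathrm{fin}})$. Since $\Phi$ is continuous and $T(M_n(\C) *_\C M_n(\C))$ is compact, $\Phi$ maps closed sets to closed sets; combined with part~(i), this yields
\begin{equation*}
\Phi\big(\overline{T_\mathrm{fin}}\big) \supseteq \overline{\Phi(T_\mathrm{fin})} = \overline{\cFM_\mathrm{fin}(n)},
\end{equation*}
where I use that the continuous image of the closure is contained in the closure of the image, and compactness gives that $\Phi(\overline{T_\mathrm{fin}})$ is already closed and contains $\cFM_\mathrm{fin}(n)$, hence contains its closure. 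For the reverse inclusion $\Phi(\overline{T_\mathrm{fin}}) \subseteq \overline{\cFM_\mathrm{fin}(n)}$, I would take $\tau \in \overline{T_\mathrm{fin}}$, choose a net $\tau_\lambda \in T_\mathrm{fin}$ converging weak$^*$ to $\tau$, and use continuity of $\Phi$ together with part~(i) to get $\Phi(\tau) = \lim_\lambda \Phi(\tau_\lambda) \in \overline{\cFM_\mathrm{fin}(n)}$.

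The main obstacle I anticipate is purely a matter of bookkeeping around the closed-image argument: one must be careful that $\overline{\Phi(T_\mathrm{fin})}$ and $\Phi(\overline{T_\mathrm{fin}})$ genuinely coincide rather than merely being comparable. This is where compactness of the trace simplex is essential --- it guarantees that $\Phi(\overline{T_\mathrm{fin}})$ is compact hence closed, so the two-sided inclusions collapse to equality. Everything else reduces to the dictionary already provided by Proposition~\ref{prop:coordinates}, so the genuine mathematical input is concentrated in Theorem~\ref{thm:Tfin(MnMn)}, which is invoked only for the final identification with the hyperlinear traces.
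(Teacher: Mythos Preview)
Your proof is correct and follows essentially the same approach as the paper: surjectivity, continuity, and affinity from the explicit formula~\eqref{eq:Phi} and Proposition~\ref{prop:coordinates}; part~(i) by tracing through the constructions in Proposition~\ref{prop:coordinates} in both directions; and part~(ii) from~(i), continuity, compactness of $\overline{T_\mathrm{fin}}$, and Theorem~\ref{thm:Tfin(MnMn)}. One trivial slip: for surjectivity you need the direction (i)~$\Rightarrow$~(iv) of Proposition~\ref{prop:coordinates} (given factorizable $T$, produce $\tau$), not (iv)~$\Rightarrow$~(i) as you wrote, though your verbal description of the argument is the correct one.
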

 
 \begin{proof} Surjectivity of $\Phi$ follows from Proposition~\ref{prop:coordinates}. To prove it is continuous and affine, it suffices to show that the map $\tau \mapsto \Phi(\tau)(x)$ is continuous and affine, for all $x \in M_n(\C)$. This follows easily from \eqref{eq:Phi}.
 
 (i). If $\tau$ belongs to $T_\mathrm{fin}(M_n(\C) \! *_\C \! M_n(\C))$, then $M:= \pi_\tau(M_n(\C) \! *_\C \! M_n(\C))''$ is finite di\-men\-sional. It follows from the proofs of the implications (iv) $\Rightarrow$ (iii) $\Rightarrow$ (ii) $\Rightarrow$ (i) in Proposition~\ref{prop:coordinates} that $T = \Phi(\tau)$ admits a factorization through $(M, \tau)$, so $T$ belongs to $\mathcal{FM}_\mathrm{fin}(n)$.
 
 Likewise, if $T$ belongs to $\mathcal{FM}_\mathrm{fin}(n)$, then we can take the finite von Neumann algebra $M$ with normal faithful tracial state $\tau_M$ in (iii) of Proposition~\ref{prop:coordinates} to be finite dimensional. Let $\pi \colon  M_n(\C) \! *_\C \! M_n(\C) \to M$ be as in the proof of (iii) $\Rightarrow$ (iv) in Proposition~\ref{prop:coordinates}, and let $\tau = \tau_M \circ \pi$. Then $\tau$ is a tracial state on $M_n(\C) \! *_\C \! M_n(\C) $ with kernel $I_{\tau'} = \mathrm{ker}(\pi)$. Hence $(M_n(\C) \! *_\C \! M_n(\C))/I_{\tau'}$ is finite dimensional, so $\tau \in  T_\mathrm{fin}(M_n(\C) \! *_\C \! M_n(\C))$. It follows from the proof of  (iii) $\Rightarrow$ (iv)  in Proposition~\ref{prop:coordinates} that $T = \Phi(\tau)$. 
 
Finally, (ii) follows from (i), continuity of $\Phi$, compactness of $\overline{ T_\mathrm{fin}(M_n(\C) \! *_\C \! M_n(\C))}$, and Theorem~\ref{thm:Tfin(MnMn)}.
 \end{proof}

\begin{remark} Proposition~\ref{prop:Phi} provides a direct proof, avoiding ultraproduct arguments, of the well-known fact that the set  $\cFM(n)$ is a compact convex subset of the normed vector space of all linear maps on $M_n(\C)$. 
\end{remark}

\noindent
Note that the map $\Phi$ is not injective. More precisely, if we let $V_n$ denote the $n^4$-dimensional operator subspace of $M_n(\C) \! *_\C \! M_n(\C)$ spanned by $\{\iota_1(x)\iota_2(y) : x,y \in M_n(\C)\}$, then, for $\tau,\tau' \in T(M_n(\C) \! *_\C \! M_n(\C))$,
\begin{equation}\label{eq:En}
\Phi(\tau) = \Phi(\tau') \; \;  \text{if and only if} \; \;  \tau|_{V_n} = \tau'|_{V_n}. \qquad 
\end{equation}

The next corollary extends and sheds new light on \cite[Theorem 3.7]{HaaMusat:CMP-2015}, which states that (i) and (ii) below are equivalent.

\begin{corollary} \label{cor:CEP} The following statements are equivalent:
\begin{enumerate}
\item The Connes Embedding Problem has an affirmative answer;
\item $\mathcal{FM}_\mathrm{fin}(n)$ is dense in $\cFM(n)$, for all $n \ge 3$;
\item $T_\mathrm{hyp}(M_n(\C) \! *_\C \! M_n(\C))  =  T(M_n(\C) \! *_\C \! M_n(\C))$, for all $n \ge 2$;
\item For each $n \ge 2$, and each $\tau$ in $T(M_n(\C) \! *_\C \! M_n(\C))$, there is $\tau'$ in $T_\mathrm{hyp}(M_n(\C) \! *_\C \! M_n(\C))$ such that $\tau|_{V_n} = \tau'|_{V_n}$.
\end{enumerate}
\end{corollary}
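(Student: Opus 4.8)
The plan is to establish the cycle (i) $\Rightarrow$ (iii) $\Rightarrow$ (iv) $\Rightarrow$ (ii) $\Rightarrow$ (i), in which every implication but the last follows softly from the machinery assembled in this section (the map $\Phi$, Proposition~\ref{prop:Phi}, the identity \eqref{eq:En}, and Theorem~\ref{thm:Tfin(MnMn)}), while the closing implication is the genuinely deep input, which I would import from \cite[Theorem 3.7]{HaaMusat:CMP-2015}. The point of the corollary relative to that theorem is precisely that the trace-on-free-product viewpoint makes (iii) and (iv) transparent reformulations, so only (ii) $\Leftrightarrow$ (i) carries real weight.

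For (i) $\Rightarrow$ (iii) I would invoke the characterization recalled in Section~\ref{sec:fd}, that an affirmative answer to the Connes Embedding Problem is equivalent to $T_{\mathrm{hyp}}(\cA) = T(\cA)$ for every unital \Cs{} $\cA$; specializing to $\cA = M_n(\C) *_\C M_n(\C)$ yields (iii) for all $n \ge 2$. The step (iii) $\Rightarrow$ (iv) is immediate: if $T_{\mathrm{hyp}} = T$, then every $\tau$ already lies in $T_{\mathrm{hyp}}$, so one takes $\tau' = \tau$, for which $\tau|_{V_n} = \tau'|_{V_n}$ trivially.

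The one non-formal easy step is (iv) $\Rightarrow$ (ii). Fix $n \ge 3$, and let $\tau \in T(M_n(\C) *_\C M_n(\C))$ be arbitrary. By (iv) there is $\tau' \in T_{\mathrm{hyp}}(M_n(\C) *_\C M_n(\C))$ with $\tau|_{V_n} = \tau'|_{V_n}$, whence $\Phi(\tau) = \Phi(\tau')$ by \eqref{eq:En}. Since $\Phi$ is surjective and $\Phi(T_{\mathrm{hyp}}) = \overline{\mathcal{FM}_\mathrm{fin}(n)}$ by Proposition~\ref{prop:Phi}(ii) (which rests on Theorem~\ref{thm:Tfin(MnMn)}), this gives $\cFM(n) = \Phi(T(M_n(\C) \! *_\C \! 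M_n(\C))) \subseteq \Phi(T_{\mathrm{hyp}}(M_n(\C) \! *_\C \! M_n(\C))) = \overline{\mathcal{FM}_\mathrm{fin}(n)}$, so $\mathcal{FM}_\mathrm{fin}(n)$ is dense in the (compact) set $\cFM(n)$. As $n \ge 3$ was arbitrary, (ii) follows; note that (iv), being stated for all $n \ge 2$, comfortably covers this range.

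The main obstacle is the closing implication (ii) $\Rightarrow$ (i). The difficulty is structural: the map $\Phi$ records only the restriction $\tau|_{V_n}$ of a trace to the finite dimensional operator space $V_n$, so density of factorizable maps is a statement purely about these restrictions and cannot, by soft arguments alone, recover the equality of the full trace simplices in (iii), let alone the Connes Embedding Problem. I would therefore close the cycle by citing \cite[Theorem 3.7]{HaaMusat:CMP-2015}, which supplies (ii) $\Leftrightarrow$ (i). For a self-contained alternative that explains the restriction to $n \ge 3$, one can instead prove (iii) $\Rightarrow$ (i) directly, using that for $n \ge 3$ the algebra $M_n(\C) \otimes \cA$ is a quotient of $M_n(\C) *_\C M_n(\C)$ whenever $\cA$ is singly generated (equivalently, generated by $n-1$ unitaries): reducing the Connes Embedding Problem to the hyperlinearity of an arbitrary finitely generated tracial von Neumann algebra $(M,\tau)$, one realizes $M_n(\C) \otimes M$ as the GNS completion of a trace pulled back to $M_n(\C) *_\C M_n(\C)$, applies (iii) to embed it trace-preservingly into $\cR^\omega$, and restricts to the tensor factor $M$. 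The crux in either route is thus the passage from the finite dimensional correlation data seen by $\Phi$ to a statement about arbitrary traces, which is exactly where the universality of $M_n(\C) *_\C M_n(\C)$ (and hence the hypothesis $n \ge 3$) enters.
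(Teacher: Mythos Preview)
Your proposal is correct and follows essentially the same cycle (i) $\Rightarrow$ (iii) $\Rightarrow$ (iv) $\Rightarrow$ (ii) $\Rightarrow$ (i) as the paper, with the same justifications: the general equivalence of CEP with universal hyperlinearity for (i) $\Rightarrow$ (iii), triviality of (iii) $\Rightarrow$ (iv), Proposition~\ref{prop:Phi}(ii) together with \eqref{eq:En} for (iv) $\Rightarrow$ (ii), and the citation of \cite[Theorem 3.7]{HaaMusat:CMP-2015} for (ii) $\Rightarrow$ (i). Your additional sketch of a direct argument for (iii) $\Rightarrow$ (i) via quotients onto $M_n(\C)\otimes\cA$ is in the spirit of what the paper records separately as Remark~\ref{rem:JP}; one small imprecision there is that ``singly generated'' is not literally equivalent to ``generated by $n-1$ unitaries'', but for $n\ge 3$ the former implies the latter, which is all you need.
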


\begin{proof} It is clear that an affirmative answer to the Connes Embedding Problem is equivalent to all traces on all \Cs s being hyperlinear, thus proving (i) $\Rightarrow$ (iii), while the implication (iii) $\Rightarrow$ (iv) is trivial. It follows from Proposition~\ref{prop:Phi} (ii) and \eqref{eq:En} that (iv) $\Rightarrow$ (ii). Finally, (ii) $\Rightarrow$ (i) is contained in \cite[Theorem 3.7]{HaaMusat:CMP-2015}.
\end{proof}

\begin{remark} Suppose that $\tau$ is a tracial state on $M_n(\C) \! *_\C \! M_n(\C)$, and that $T= \Phi(\tau)$ is the corresponding factorizable map on $M_n(\C)$. Then, by the proof of Proposition~\ref{prop:coordinates}, we see that $T$ admits a factorization through the finite von Neumann algebra $M = \pi_\tau(M_n(\C) \! *_\C \! M_n(\C))''$, equipped with the trace $\tau$. In particular, we see that $M$ admits an embedding into $\cR^\omega$ if and only if $\tau$ is hyperlinear. It was shown in \cite{HaaMusat:CMP-2015} that $T$ belongs to $\mathcal{FM}_\mathrm{fin}(n)$ if and only if it admits a factorization through a finite von Neumann algebra that embeds into $\cR^\omega$.
\end{remark}

\begin{remark} \label{rem:JP}
J.\ Peterson mentioned to us that one can prove the implication (iii) $\Rightarrow$ (i) of Corollary~\ref{cor:CEP} directly as follows: Assume that (iii) holds and that $(M,\tau)$ is a separable II$_1$-factor. Upon replacing $M$ by $M \otimes \cR$, we may assume that $M$ is singly generated, and hence generated by two self-adjoint elements $a$ and $b$, that can be taken to be contractions. Take sequences $\{a_n\}_{n \ge 1}$ and $\{b_n\}_{n \ge 1}$ of self-adjoint contractions converging with respect to $\| \, \cdot \, \|_2$ to $a$ and $b$, respectively, so that $C^*(1_M,a_n)$ and $C^*(1_M,b_n)$ admit unital embeddings (necessarily trace-preserving) into $M_n(\C)$. (Such unital embeddings exist precisely when $a_n$ and $b_n$ are of the form $\sum_{j=1}^n\lambda_j e_j$, for some real numbers $\lambda_j$ and some pairwise orthogonal and pairwise equivalent projections $e_1, \dots, e_n$ summing up to $1$.) Then $C^*(1_M,a_n,b_n)$ admits a unital embedding into $M_n(\C) \! *_\C \! M_n(\C)$, that is trace-preserving with respect to some tracial state $\tau$ on $M_n(\C) \! *_\C \! M_n(\C)$, which, by assumption, is hyperlinear. This shows that $C^*(1_M,a_n,b_n)$ admits a unital trace-preserving embedding into $\cR^\omega$.  Consequently, $M$ embeds into the double ultrapower $(\cR^\omega)^\omega$, and therefore into $\cR^\omega$, by a diagonal argument.
\end{remark}

\noindent We end this paper with a result concerning the structure of the simplex 
$T(M_n(\C) *_\C M_n(\C))$, and a related result describing which unital \Cs s are quotients of $M_n(\C) \! *_\C \! M_n(\C)$, or, equivalently, which unital \Cs s can be generated by two copies of $M_n(\C)$.  Recall that a unital \Cs{} is generated by $n \ge 1$ elements if and only if it is generated by $2n$ self-adjoint elements; and if a unital \Cs{} is generated by $n$ self-adjoint elements, then it is also generated by $n$ unitary elements.

\begin{proposition} \label{prop:fin_gen}
Let $\cA$ be a unital \Cs, and let $n \ge 2$ be an integer. 
\begin{enumerate}
\item If there exists a unital surjective \sh{} $M_n(\C) \! *_\C \! M_n(\C) \to M_n(\C) \otimes \cA$, then $\cA$ is generated by at most $n^2$ elements. 
\item If $\cA$ is generated by $n-1$ unitaries, then there exists a unital surjective \sh{} $M_n(\C) \! *_\C \! M_n(\C) \to M_n(\C) \otimes \cA$.
\end{enumerate}
\end{proposition}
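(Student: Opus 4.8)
The plan splits cleanly into the two implications, which require quite different tools.

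For part (i), I would argue by a counting/generation argument. Suppose $\pi \colon M_n(\C) *_\C M_n(\C) \to M_n(\C) \otimes \cA$ is a unital surjective \sh. The free product is generated by the two copies $\iota_1(M_n(\C))$ and $\iota_2(M_n(\C))$, so the images $\pi(\iota_1(M_n(\C)))$ and $\pi(\iota_2(M_n(\C)))$ generate $M_n(\C) \otimes \cA$. The first copy I would normalize: since any two unital copies of $M_n(\C)$ in a unital \Cs{} whose $K_0$-data match are unitarily equivalent (this is exactly the mechanism used in Lemma~\ref{lm:lift-M_n}), I can arrange after conjugating by a unitary in $M_n(\C) \otimes \cA$ that $\pi \circ \iota_1$ is the standard inclusion $x \mapsto x \otimes 1_\cA$. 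Then $M_n(\C) \otimes 1_\cA$ together with the second copy $g_{ij} := \pi(\iota_2(e_{ij}))$ generates $M_n(\C) \otimes \cA$. Taking the relative commutant, $\cA \cong 1_n \otimes \cA$ is the commutant of $M_n(\C) \otimes 1_\cA$, and the compression/corner argument shows $\cA$ is generated by the $n^2$ entries of the second copy relative to the first; concretely, the elements $e_{1i} g_{ij} e_{j1}$ (living in the corner $e_{11} (M_n(\C)\otimes\cA) e_{11} \cong \cA$) generate $\cA$, giving at most $n^2$ generators.

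For part (ii), suppose $\cA$ is generated by $n-1$ unitaries $u_1, \dots, u_{n-1}$. I want to build a unital surjection $M_n(\C) *_\C M_n(\C) \to M_n(\C) \otimes \cA$, equivalently (by the universal property of the free product invoked in Lemma~\ref{lm:lift-M_n}) a \emph{pair} of unital \sh s $M_n(\C) \to M_n(\C) \otimes \cA$ whose images jointly generate $M_n(\C) \otimes \cA$. The first is the standard inclusion $\iota_1 \colon x \mapsto x \otimes 1_\cA$. For the second, the idea is to encode the $n-1$ unitary generators into a single copy of $M_n(\C)$ by placing them off the block-diagonal of a permutation-type unitary. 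Concretely, I would define a unitary $w \in M_n(\C) \otimes \cA$ whose only nonzero entries are $w_{i+1,i} = u_i$ for $1 \le i \le n-1$ and $w_{1,n} = 1$ (a cyclic shift ``twisted'' by the $u_i$), and let the second copy of $M_n(\C)$ be $\alpha := \Ad(w) \circ \iota_1$, i.e.\ $\alpha(e_{ij}) = w (e_{ij} \otimes 1_\cA) w^*$. The \Cs{} generated by $M_n(\C) \otimes 1_\cA$ and $\alpha(M_n(\C))$ then contains $w$ (since $w$ is determined by how it intertwines the two standard systems of matrix units, or by extracting it from the products $\alpha(e_{i+1,1}) \iota_1(e_{1i})$), and the matrix entries of $w$ are exactly $1$ and the $u_i$; hence this algebra contains every $u_i$ and all of $M_n(\C) \otimes 1_\cA$, so it is all of $M_n(\C) \otimes \cA$.

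The main obstacle I anticipate is in part (ii): verifying that the single copy $\alpha(M_n(\C))$ together with $M_n(\C)\otimes 1_\cA$ genuinely recovers the unitary $w$ (and thus all the $u_i$), rather than only the $*$-algebra they generate missing some entries. The cleanest way to close this gap is to observe that $w$ is recoverable entrywise: the product $\iota_1(e_{ki})\,\alpha(e_{ij})\,\iota_1(e_{jl})$ isolates, up to the fixed matrix units, the single entry $w_{ki}\,\overline{w_{lj}}$-type combination of $w$, and since one entry of $w$ in each relevant position is the identity, one can solve for each $u_i$ individually. I would also need to double-check that $w$ as defined is actually unitary (it is: the prescribed entries form a monomial matrix with unitary nonzero entries, so $w w^* = w^* w = 1_n \otimes 1_\cA$), and that $n-1$ generators suffice precisely because the $n$ off-diagonal ``shift'' slots of an $n \times n$ cyclic pattern accommodate $n-1$ nontrivial unitaries plus one identity. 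Once $w$ is recovered, surjectivity is immediate and the universal property converts the pair $(\iota_1, \alpha)$ into the desired surjection from $M_n(\C) *_\C M_n(\C)$.
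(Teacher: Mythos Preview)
Your argument for (ii) is correct and takes a genuinely different route from the paper. The paper builds the second system of matrix units directly by setting $f_{11}=e_{11}\otimes 1_\cA$, $f_{1j}=e_{1j}\otimes u_j$ for $2\le j\le n$, and $f_{ij}=f_{1i}^*f_{1j}$; it then reads off $1\otimes u_j$ immediately from $f_{1j}$ and $e_{j1}\otimes 1_\cA$. Your construction instead conjugates the standard copy by the monomial unitary $w$ and then recovers the entries of $w$ from products of the form $(e_{ab}\otimes 1)\,\alpha(e_{ij})\,(e_{cd}\otimes 1)=e_{ad}\otimes w_{bi}w_{cj}^{*}$; choosing $c=1$, $j=n$ (so $w_{cj}=1$) isolates each $w_{bi}$ and hence each $u_i$. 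Both approaches are short; the paper's avoids the intermediate step of recovering $w$, while yours makes transparent that the second copy is an inner perturbation of the first.

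Your argument for (i), however, has a real gap. After normalizing the first copy to $x\mapsto x\otimes 1_\cA$, you propose that the $n^2$ compressions $(e_{1i}\otimes 1)\,\beta(e_{ij})\,(e_{j1}\otimes 1)=e_{11}\otimes a^{ij}_{ij}$ generate $\cA$, where $\beta(e_{ij})=\sum_{k,\ell}e_{k\ell}\otimes a^{ij}_{k\ell}$. This fails already for your own construction in (ii): with $n=2$, $\cA=C(\T)$, and $w=\bigl(\begin{smallmatrix}0&1\\u&0\end{smallmatrix}\bigr)$, one computes $\beta(e_{11})=e_{22}\otimes 1$, $\beta(e_{12})=e_{21}\otimes u$, $\beta(e_{21})=e_{12}\otimes u^*$, $\beta(e_{22})=e_{11}\otimes 1$, so every diagonal entry $a^{ij}_{ij}$ vanishes. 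The issue is that you are only sampling one matrix entry from each $\beta(e_{ij})$, and there is no reason those particular $n^2$ entries should see the generators of $\cA$. The paper closes this gap by using that $M_n(\C)$ is singly generated: pick $g\in M_n(\C)$ with $C^*(g)=M_n(\C)$, write $\beta(g)=\sum_{i,j}e_{ij}\otimes g_{ij}$, and observe that $M_n(\C)\otimes\cA$ is generated by $M_n(\C)\otimes 1_\cA$ together with the single element $\beta(g)$; compressing then shows $\cA=C^*(1_\cA,\{g_{ij}\})$, giving exactly $n^2$ generators.
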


\begin{proof}
(i). The unital \sh{} $\varphi \colon M_n(\C) \! *_\C \! M_n(\C) \to M_n(\C) \otimes \cA$ is determined by two unital \sh s $\alpha,\beta \colon M_n(\C) \to M_n(\C) \otimes \cA$, and we may take $\alpha(x) = x \otimes 1_\cA$, for $x \in M_n(C)$. Now, $M_n(\C)$ is singly generated, say by an element $g \in M_n(\C)$, and
$$\beta(g) = \sum_{i,j=1}^n e_{ij} \otimes g_{ij},$$
for some elements $g_{ij} \in \cA$, $1 \le i,j \le n$. Since $\varphi$ is surjective, it follows that $\cA$ must be generated by the set $\{g_{ij} : 1 \le i,j \le n\}$. 

(ii). Suppose that $\cA$ is generated by unitaries $u_2, \dots, u_n$ in $\cA$. Set $f_{11} = e_{11} \otimes 1_\cA$ and  $f_{1j} = e_{1j} \otimes u_j$, for $2 \le j \le n$. Note that $f_{1j}f_{1j}^* = e_{11} \otimes 1_\cA$, and that $f_{1j}^*f_{1j} =e_{jj} \otimes 1_\cA$, for $1 \le j \le n$. Further, set $f_{ij} = f_{1i}^*f_{1j}$, $1 \le i,j \le n$. Observe that $\{f_{ij}\}$ is a set of $n\times n$ matrix units in $M_n(\C) \otimes \cA$. Hence there exists a unital \sh{} $\beta \colon M_n(\C) \to  M_n(\C) \otimes \cA$ satisfying $\beta(e_{ij}) = f_{ij}$, $1 \le i,j \le n$. Let $\gamma \colon M_n(\C) \! *_\C \! M_n(\C) \to M_n(\C) \otimes \cA$ be  determined by $\alpha$ and $\beta$ (i.e., $\gamma(\iota_1(x)) = \alpha(x)$ and $\gamma(\iota_2(x)) = \beta(x)$, for $x \in M_n(\C)$). It is then easy to see that  $1 \otimes u_j$ belongs to the image of $\gamma$, for $2 \le j \le n$, and that $M_n(\C) \otimes 1_\cA$ is contained in the image of $\gamma$. This shows that $\gamma$ is surjective.
\end{proof}

\noindent
It follows in particular that $M_n(\C) \otimes \cA$ is a quotient of $M_n(\C) \! *_\C \! M_n(\C)$, for every singly generated unital \Cs{} $\cA$, when $n \ge 3$. It was shown in \cite{ThielWin:generator} that every unital separable $\cZ$-stable \Cs{} is singly generated.  It is easy to see that every finite dimensional \Cs{} is singly generated, so $M_n(\C) \otimes \cA$ is generated by two copies of $M_n(\C)$, whenever $\cA$ is finite dimensional and $n \ge 3$. 

\begin{remark} We know, e.g., from  Remark~\ref{rem:not-closed} that $T_{\mathrm{fin}}(M_n(\C) \! *_\C \! M_n(\C))$ is not closed, for all $n \ge 2$. For $n \ge 11$, this also follows from Proposition~\ref{prop:Phi} and the main result from \cite{MusRor:infdim}, which states that $\cFM_{\mathrm{fin}}(n)$ is non-closed.

One can exhibit many traces in $T(M_n(\C) \! *_\C \! M_n(\C))$, and also in $\overline{T_{\mathrm{fin}}(M_n(\C) \! *_\C \! M_n(\C))}$, which are of type II$_1$. Indeed, take any unital separable tracial \Cs{} $(\cA, \tau)$. Then, by Proposition~\ref{prop:fin_gen}, there is a trace $\tau'$ on $M_n(\C) \! *_\C \! M_n(\C)$ that factors through the trace  $\tau \otimes {\mathrm{tr}_n} \otimes \tau_\cZ$ on $\cA \otimes M_n(\C) \otimes \cZ$. The trace $\tau'$ is always of type II$_1$, it is a factor trace if  $\tau$ is, and $\tau'$ belongs to the closure of $T_{\mathrm{fin}}(M_n(\C) \! *_\C \! M_n(\C))$, which equals $T_{\mathrm{hyp}}(M_n(\C) \! *_\C \! M_n(\C))$, if $\pi_\tau(\cA)''$ embeds into $\cR^\omega$.
\end{remark}

\noindent As an interesting application of Proposition~\ref{prop:fin_gen}, we show next that the trace simplex of $M_n(\C) \! *_\C \! M_n(\C)$, for $n \ge 3$, is as large as possible. 
For the proof of this result we make use of the following elementary fact: Any surjective unital \sh{} $\varphi \colon \cA \to \cB$ between unital \Cs s $\cA$ and $\cB$ induces an affine continuous injective map $T(\varphi) \colon T(\cB) \to T(\cA)$, by $T(\tau) = \tau \circ \varphi$, for $\tau \in T(\cB)$. Moreover, $T(\varphi)$ maps extreme points of $T(\cB)$ into extreme points of $T(\cA)$, and hence faces of $T(\cB)$ onto faces of $T(\cA)$. Indeed, if $\tau$ is an extreme point of $T(\cB)$, then $\pi_\tau(\cB)''$ is a factor. As $\cB = \varphi(\cA)$, we infer that $\pi_{\tau \circ \varphi}(\cA) = \pi_\tau(\cB)$, so $\pi_{\tau \circ \varphi}(\cA)'' = \pi_\tau(\cB)''$ is a factor, which implies that $T(\tau) = \tau \circ \varphi$ is an extreme point. 

\begin{theorem} \label{prop:Poulsen}
Let $n \ge 3$ be an integer. Then each metrizable Choquet simplex is affinely homeomorphic to a (closed) face of $T(M_n(\C) \! *_\C \! M_n(\C))$.
\end{theorem}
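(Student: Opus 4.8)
The plan is to realize $K$ as the trace simplex of an algebra of the form $M_n(\C) \otimes \cA$ with $\cA$ singly generated, and then to transport this simplex into $T(M_n(\C) *_\C M_n(\C))$ along a quotient map supplied by Proposition~\ref{prop:fin_gen}, using the face-preservation property of induced trace maps recorded just before the theorem.

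First I would invoke the classical fact that every metrizable Choquet simplex is affinely homeomorphic to the trace simplex $T(\cB)$ of some unital separable \Cs{} $\cB$ (metrizability of $K$ matching separability of $\cB$). The realizing algebra need not be singly generated, so I would replace it by $\cA := \cB \otimes \cZ$, where $\cZ$ is the Jiang--Su algebra. Since $\cZ$ is unital, separable, nuclear and monotracial with $\cZ \otimes \cZ \cong \cZ$, the algebra $\cA$ is unital, separable and $\cZ$-stable, and $\tau \mapsto \tau \otimes \tau_\cZ$ is an affine homeomorphism $T(\cB) \to T(\cA)$; thus $T(\cA) \cong K$. By the generator theorem for $\cZ$-stable \Cs s, \cite{ThielWin:generator}, $\cA$ is singly generated.

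Since $n \ge 3$ and $\cA$ is singly generated, the remark following Proposition~\ref{prop:fin_gen} provides a unital surjective \sh{} $\varphi \colon M_n(\C) *_\C M_n(\C) \to M_n(\C) \otimes \cA$. As $M_n(\C)$ has a unique tracial state, the assignment $\tau \mapsto \mathrm{tr}_n \otimes \tau$ is an affine homeomorphism $T(\cA) \to T(M_n(\C) \otimes \cA)$, so $T(M_n(\C) \otimes \cA) \cong K$ as well. The surjection $\varphi$ induces the continuous affine injective map $T(\varphi) \colon T(M_n(\C) \otimes \cA) \to T(M_n(\C) *_\C M_n(\C))$, $\sigma \mapsto \sigma \circ \varphi$, which, by the general fact stated before the theorem, carries faces onto faces. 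Applying this to the whole simplex $T(M_n(\C) \otimes \cA)$, which is a closed face of itself, I conclude that its image is a closed face of $T(M_n(\C) *_\C M_n(\C))$; and since $T(\varphi)$ is a continuous injection of a compact space into a Hausdorff space, it is an affine homeomorphism onto that image. Chaining the homeomorphisms $K \cong T(\cA) \cong T(M_n(\C) \otimes \cA) \cong T(\varphi)\big(T(M_n(\C) \otimes \cA)\big)$ finishes the proof.

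The assembly is routine once the pieces are in place; the two substantive inputs are the realization of an arbitrary metrizable Choquet simplex as a trace simplex and the single-generation theorem \cite{ThielWin:generator}, the latter being exactly what makes the hypothesis $n \ge 3$ (rather than a larger bound depending on the number of generators of $\cA$) sufficient. The only verification requiring genuine attention is that $T(\varphi)$ sends the full simplex onto a \emph{face}: if $c\sigma_1 + (1-c)\sigma_2 = \tau\circ\varphi$ with $\sigma_1,\sigma_2 \in T(M_n(\C) *_\C M_n(\C))$ and $0 < c < 1$, then $\tau\circ\varphi$ annihilates the ideal $\ker\varphi$, hence so do $\sigma_1$ and $\sigma_2$ (by the Cauchy--Schwarz inequality for tracial states), so both factor through $M_n(\C)\otimes\cA$ and thus lie in the image of $T(\varphi)$. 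This is precisely the face-preservation statement recorded before the theorem, so it may be cited rather than reproved.
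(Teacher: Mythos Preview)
Your proof is correct and follows essentially the same route as the paper: realize the simplex as $T(\cA)$ for a singly generated unital separable \Cs{} $\cA$, apply Proposition~\ref{prop:fin_gen} to obtain a surjection $M_n(\C) *_\C M_n(\C) \to M_n(\C) \otimes \cA$, and use the face-preservation property of the induced trace map. The only difference is cosmetic: the paper realizes $K$ directly as $T(\cA)$ for a simple infinite-dimensional unital AF-algebra (automatically $\cZ$-stable by \cite{JiangSu:Z}, hence singly generated), whereas you realize $K$ as $T(\cB)$ for an arbitrary separable unital $\cB$ and then tensor with $\cZ$ to force single generation via \cite{ThielWin:generator}.
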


\begin{proof} Let $S$ be a metrizable Choquet simplex. Then there is a simple infinite dimensional unital AF-algebra $\cA$ such that $T(\cA)$ is affinely homeomorphic to $S$, see, e.g., \cite{Eff:AF} or \cite{LazLin-1971}. Every simple infinite dimensional unital AF-algebra is $\cZ$-absorbing, see \cite[Theorem 5]{JiangSu:Z}, and hence singly generated. It follows from Proposition~\ref{prop:fin_gen} above that there is a unital surjective \sh{} $\varphi \colon M_n(\C) \! *_\C \! M_n(\C) \to M_n(\C) \otimes \cA$, which, in turn, induces an injective affine continuous map $T(\varphi) \colon T(\cA) \to T(M_n(\C) \! *_\C \! M_n(\C))$. As remarked above, the image of $T(\varphi)$ is a face of $T(M_n(\C) \! *_\C \! M_n(\C))$ which is affinely homeomorphic to $S$.
 \end{proof}

\noindent It was shown in \cite[Theorems 2.3, 2.5 and 2.11]{LinOlsStern:Poulsen} that a metrizable Choquet simplex $S$ is  the Poulsen simplex if and only if the following two conditions hold: 
\begin{enumerate}
\item Each metrizable Choquet simplex is affinely homeomorphic to a face of $S$.
\item (Homogeneity) For every pair $F,F'$ of faces of $S$ with $\mathrm{dim}(F) = \mathrm{dim}(F') < \infty$, there is an affine homemorphism of $S$ that maps $F$ onto $F'$. 
\end{enumerate}
We would like to point out that property (i) by itself  does not characterize the Poulsen simplex, and hence one cannot conclude from 
Proposition~\ref{prop:Poulsen} that $T(M_n(\C) \! *_\C \! M_n(\C))$  is the Poulsen simplex. Indeed, if $S$ is the Poulsen simplex embedded in a locally convex topological  vector space $V$, then the suspension $S' := \{ (ts, 1-t) : s \in S, \, 0 \le t \le 1\} \subseteq V \times \R$ of $S$ is a Choquet simplex that contains $S$ as a face, but it is not itself the Poulsen simplex, as the extreme points are not dense.

\vspace{.3cm} \noindent \emph{Acknowledgements:}
We thank James Gabe for fruitful discussions about traces on residually finite dimensional \Cs s, Erik Alfsen for sharing with us his insight on the Poulsen simplex, Jesse Peterson for pointing out the argument in Remark~\ref{rem:JP}, and Taka Ozawa for suggesting the reference \cite{Wright:AW*}. We have also benefitted from discussions with Marius Dadarlat.

{\small{
\bibliographystyle{amsplain}

\providecommand{\bysame}{\leavevmode\hbox to3em{\hrulefill}\thinspace}
\providecommand{\MR}{\relax\ifhmode\unskip\space\fi MR }
\providecommand{\MRhref}[2]{%
  \href{http://www.ams.org/mathscinet-getitem?mr=#1}{#2}
}
\providecommand{\href}[2]{#2}

}}

\vspace{1cm}

\noindent
\begin{tabular}{ll}
Magdalena Musat & Mikael R\o rdam \\
Department of Mathematical Sciences & Department of Mathematical Sciences\\
University of Copenhagen & University of Copenhagen\\ 
Universitetsparken 5, DK-2100, Copenhagen \O & Universitetsparken 5, DK-2100, Copenhagen \O \\
Denmark & Denmark\\
musat@math.ku.dk & rordam@math.ku.dk
\end{tabular}

\end{document}